\def\h{\hbox}
\def\<{\langle}
\def\>{\rangle}
\numberwithin{equation}{section}
\def\<{\langle}
\def\>{\rangle}
\def\ra{\rightarrow}
\def\p{\partial}
\def\a{\alpha}
\def\ld{{\lambda}}
\def\ov{\overline}
\def\h{\hbox}
\def\b{\beta}
\def\a{\alpha}
\def\G{\Gamma}
\newcommand{\CC}{{\mathbb C}}
\newcommand{\RR}{{\mathbb R}}
\newcommand{\NN}{{\mathbb N}}
\newcommand{\wt}{\widetilde}
\newtheorem{thm}{Theorem}[section]
\newtheorem{cor}[thm]{Corollary}
\newtheorem{defn}[thm]{Definition}
\newtheorem{conj}[thm]{Conjecture}
\newtheorem{example}[thm]{Example}
\numberwithin{equation}{section}
\begin{document}
\date{}
\author{Xiaojun Huang \footnote{Supported in part by  DMS-2247151 and DMS-2000050}\  \
and Wanke Yin\footnote{Supported in part by  NSFC-12171372}}

\title{\bf Regular types and order of vanishing along  a set of non-integrable vector fields}

\vspace{3cm} \maketitle \centerline{ Dedicated to the memory of
Professor Zhi-Hua Chen}
\medskip

\vspace{3cm} \maketitle

{\bf Abstract}: This paper has two parts. We first survey  recent
efforts on the   Bloom conjecture \cite{Bl2} which  still remains
open in the case of complex dimension at least 4.  Bloom's
conjecture concerns the equivalence of three regular types. There is
a more general  important   notion, called the singular D'Angelo
type (or simply, D'Angelo type) \cite{DA1}. While the finite
D'Angelo type condition is the right one for the study of local
subelliptic estimates for Kohn's $\ov{\p}$-Neumann problem, regular
types are important as their finiteness gives the global regularity
up to the boundary of solutions of Kohn's $\ov{\p}$-Neumann problem
\cite{Ca2} \cite{Zai}.

In the second part of the paper, we provide a proof of a seemingly
elementary but  a truly fundamental property (Theorem \ref{cor1.3}
or its CR version Theorem \ref{cor2.3}) on the vanishing order of
smooth functions along a system of non-integrable vector fields.
A special case, Corollary \ref{cor2.4}, of Theorem \ref{cor2.3} had
 already appeared in a paper of D'Angelo [pp 105, 3. Remark,
\cite{DA3}].
A main goal in this part is to provide   proofs for these results
for the purpose of future references. Our arguments are based on a
deep normalization theorem for a system of non-integrable vector
fields due to Helffer-Nourrigat \cite{HN}, as well as its late
generalization in Boauendi-Rothschild \cite{BR}.

\section{Regular types and Bloom conjecture}

\subsection{Introduction}

For  a smoothly bounded pseudoconvex domain $D$ in  ${\mathbb C}^n$
with $n\ge 2$, many analytic and geometric properties of  $D$ are
determined by its invariants from the inherited CR structure bundle
over $\p D$. In the 1960's, Kohn \cite{FK} established the
subelliptic estimate for the $\ov{\p}$-Newmann problem when the Levi
form of $\p D$ is positive definite everywhere, which is called the
strong pseudoconvexity of $D$.

To generalize his subelliptic estimate for the $\ov\partial$-Neumann
problem to bounded weakly pseudoconvex domains in ${\mathbb C}^2$,
Kohn in his fundamental paper \cite{Kohn1} introduced three
different boundary CR invariants for $D\subset {\mathbb C}^2$. These
invariants are, respectively, the maximum order of contact at $p\in
\p D$  with smooth holomorphic curves at  $p$, denoted by
$a^{(1)}(\p D, p)$ and called the contact type at $p$, order of
vanishing added by two of  the Levi-form along the contact bundle,
denoted by $c^{(1)}(\p D, p)$ and called the Levi-form type at $p$,
and the length of the iterated Lie brackets of boundary CR vector
fields as well as their conjugates needed to recover the boundary
contact direction, denoted by $t^{(1)}(\p D, p)$ and called vector
field commutator type at $p$. Kohn proved that all these invariants
are in fact the same, thus simply called the type value of $\partial
D\subset {\mathbb C}^2$ at $p$. When this type value is finite at
each point, $D$ is called a smoothly bounded pseudoconvex domain of
finite type. Kohn's work in \cite{Kohn1}  shows that the subelliptic
estimate for $\ov\partial$-Neumann problems   holds for such a
domain. Together with that of Greiner \cite{Gr}, Kohn's work also
gives the precise information of the subelliptic gain for the
$\ov\partial$-Neumann problem for a smoothly bounded finite type
weakly pseudoconvex domain in ${\mathbb C}^2$.


Generalizations of Kohn's notion of the boundary finite type
condition to higher dimensions have been a subject under  extensive
investigations in the past  half century in Several Complex
Variables.

Kohn later introduced a  finite type condition in higher dimensions
through the subelliptic multipliers  \cite{Kohn2}. The understanding
of this type  has later revived to be a very active field of studies
through the work of many people including
Diederich-Fornaess \cite{DF}, Siu \cite{Siu1},
 Kim-Zaistev \cite{KZ1} \cite{KZ2}, Nicoara \cite{Nic}  as well as  the reference therein.

Bloom \cite{Bl1} and Bloom-Graham \cite{BG1} established Kohn's
original notion of types in ${\mathbb C}^2$ to any dimensions.
Namely, for each integer $s\in [1,n-1]$ and for a smooth real
hypersurface $M\subset {\mathbb C}^n$ with $n\ge 2$  and $p\in M$,
Bloom-Graham and Bloom defined the vector field commutator type
$t^{(s)}(M,p)$ , the Levi-form type $c^{(s)}(M,p)$ and the regular
contact type $a^{(s)}(M,p)$ of $M$ at $p$, which are called the
regular multi-types of $M$ at $p\in M$.
Bloom-Graham \cite{BG1} and Bloom \cite{Bl1} showed that when
$s=n-1$, all these types are also the same as in the case of $n=2$.
However, without pseudoconvexity for $M$, Bloom \cite{Bl2} showed
that when $s\not = n-1$, while the contact type $a^{(s)}$ may be
finite, the commutator type $t^{(s)}$ and the Levi-form type
$c^{(s)}$ can be infinite in many examples. The commutator type is
intrinsically defined only through the Lie bracket of CR or
conjugate CR vector fields of $M$ valued in some smooth subbundles
of $T^{(1,0)}M\oplus T^{(0,1)}M$. This notion has already been an
important object in the fields such as  Sub-elliptic Analysis and
Partial Differential Equations. It is often referred as H\"ormander
commutator type in the literature. The other two types are more on
the emphasis of complex analysis, defined through the complex
structure of the ambient complex space.

Different from the case of complex dimension two, the regular types
are not the right one  for  the  study of the local subelliptic
estimates for the $\ov\partial$-Neumann problem for domains in a
complex space of complex dimension at least three. However, the
early work of Catlin \cite{Ca2} and the very recent  nice work of
Zaitsev \cite{Zai} showed that  finite regular types force the
finiteness of Catlin's multitypes and also Zaitsev's tower
multi-types (which are similarly defined as the Levi-form type but
only for a certain formally integrable smooth subbundles of
$T^{(1,0)}$). It then can be used to produce a stratification of $\p
D$ into submanifolds with controlled holomorphic dimension, which
provides Property-P for $\p D$ with a finite regular type condition.
Therefore the finite regular type of $\p D$ gives the compactness of
the Newmann operator of $D$, and thus the global regularity of the
$\ov\partial$-Neumann problem follows from the classical work of
Kohn-Nirenberg \cite{Str}.


The fundamental work of D'Angelo in \cite{DA1} studied the  notion
of singular types, widely called the D'Angelo types, by considering
the order of contact with not just smooth complex submanifolds but
possibly singular complex analytic varieties. D'Angelo finite type
condition  is a singular contact type condition. Its significance is
its equivalence to the existence of the local subelliptic estimate
after the fundamental   work of Kohn \cite{Kohn2},
Diederich-Fornaess \cite{DF} and Catlin \cite{Ca1}.

The types   mentioned above were introduced through different
aspects of studies. Revealing the connections among them always
results in a deeper understanding of the subject. For instance,
proving that the Kohn multiplier ideal  type is equivalent to the
finite D'Angelo type would provide a new and  more direct solution
of the $\ov\partial$-Neumann problem.

\subsection{Bloom's  conjecture and D'Angelo's conjecture}\label{subb1}

Let $M\subset \mathbb{C}^n$ be a smooth real hypersurface with $p\in
M$. Then  $T^{1,0}M$ is a  smooth smooth vector bundle over $M$ of
complex dimension $(n-1)$. A smooth section $L$ of  $T^{1,0}M$ is
called a smooth vector field of type $(1,0)$  or a CR vector field
along $M$, and its complex conjugate is called smooth vector field
of type $(0,1)$ or a conjugate CR vector field along $M$. Let $\rho$
be a defining function of $M$, namely, $\rho\in C^\infty(U)$ with
$U$ an open neighborhood of $M\subset\mathbb{C}^n$ and   $U\cap
M=\{\rho=0\}\cap U$, $d\rho|_{U\cap M}\neq 0$. Denote by ${\mathcal
X}_{\CC}(M)$  the $C^\infty(M)$-module of all
 complex valued smooth vector fields tangent to $M$.
 Write $\theta=-\frac{1}{2}(\p \rho-
\ov{\p}\rho)$, called a pure imaginary contact form along $M$. Write
$T= 2i\h{Im}\left(\sum_{j=1}^{n}\frac{\p
\rho}{\p\ov{z_j}}\frac{\p}{\p z_j}\right),$ called a pure imaginary
contact vector field of $M$. The following holds trivially:
 $$\langle \theta,T\rangle=|\p \rho|^2> 0,\  \langle L,\theta\rangle=0\
 \h{ for  any } \h{CR vector field }
 \ L \h{ along }M.$$
 For two tangent vector fields $X, Y\in {\mathcal X}_{\CC}(M)$,
 define
 the Levi form $$\lambda(X,Y)=\langle\theta, [X,\ov{Y}]\rangle.$$ By the
Cartan lemma, $\lambda(X,Y)=2\langle d\theta,
X\wedge\ov{Y}\rangle=d\theta(X,Y).$ After replacing $\rho$ by
$-\rho$, when needed, if we can make $\lambda(L, L)$ positive
definite along $M$ for any vector field $L\not=0$ of type $(1,0)$,
we say $M$ is strongly pseudoconvex. If we can only make $\ld$
semi-positive definite, we call $M$ a weakly pseudoconvex
hypersurface. When $L_1=\sum_{j=1}^{n}\xi_j\frac{\p }{\p z_j},
L_2=\sum_{j=1}^{n}\eta_j\frac{\p }{\p z_j}$, we then have
\begin{equation}\label{004}
\ld(L_1,L_2)=\sum_{j,\ell=1}^{n}\frac{\p^2\rho}{\p z_j \ov{\p}
z_\ell}\xi_j\ov{\eta_\ell}.
\end{equation}
Levi form is a Hermitian form over $T^{(1,0)}M$.

For any $1\leq s\leq n-1$, let $B$ be a smooth complex vector
subbundle of $T^{1,0}M$  of complex dimension $s$. Let
$\mathcal{M}_1(B)$ be the $C^\infty(M)$-submodule of ${\mathcal
X}_\CC(M)$ spanned by the smooth $(1,0)$ vector fields $L$ with
$L|_q\in B|_q$ for each $q\in M$, together with their complex
conjugates. For $\mu\geq 1$, we let $\mathcal{M}_\mu(B)$ denote the
$C^\infty(M)$-submodule spanned by commutators of length less than
or equal to $\mu$ of vector fields from $\mathcal{M}_1(B)$
(including $\mathcal{M}_1(B)$). Here, a commutator of length $\mu\ge
2$ of vector fields in $\mathcal{M}_1(B)$ is a vector field of the
following form: $[Y_{\mu},[Y_{\mu-1},\cdots,[Y_2,Y_1]\cdots]$ with
$Y_j\in \mathcal{M}_1(B)$. Define $t^{(s)}(B,p)=m$ if $\langle F,\p
\rho\rangle(p)=0$ for any $F\in \mathcal{M}_{m-1}(B)$ but $\langle
G,\p \rho\rangle(p)\neq 0$ for a certain $G\in \mathcal{M}_{m}(B)$.
Namely, $m$ is the smallest number such that
$\mathcal{M}_m(B)|_p\not\subset T^{(1,0)}_p\p D\oplus T^{(0,1)}_p\p
D.$ If such an $m$ does not exist, we set $t^{(s)}(B,p)=\infty.$
When $B$ has complex dimension one with $B$ being spanned by a
$(1,0)$-type vector field $L$ near $p$, we also write
$t^{(1)}(B,p)=t_L(\p D, p).$  $t^{(s)}(B,p)$ is called the vector
field commutator type of $B$ at $p$, or the commutator type of $L$
at $p$ when $B_q=span\{L|_q\}$ for $q$ near $p$. Define
\begin{equation}\begin{split}
  t^{(s)}(M,p)=\sup\limits_{B}\{t(B,p)|\ B\ \text{is an
   $s$-dimensional subbundle of\ }\ T^{1,0}M\}.
 \end{split} \end{equation}

$t^{(s)}(M,p)$ is called the  $s^{th}$-vector field commutator type
of $M$ at $p$, or simply the  $s^{th}$ commutator type of $M$ at
$p$.

\medskip
Write $\Gamma_{\infty}(B)$ for the set of  smooth sections of $B$.
We define $c^{(s)}(B,p)=m$ if for any $m-3$ vector fields
$F_1,\cdots,F_{m-3}$ of $\mathcal{M}_1(B)$ and any $L\in
\Gamma_{\infty}(B)$ with $L_p\not =0$, it holds that
$$
F_{m-3}\cdots F_{1}\big(\ld(L,L)\big)(p)=0;
$$
and for a certain choice of $m-2$ vector fields $G_1,\cdots,G_{m-2}$
of $\mathcal{M}_1(B)$ and a certain  $L\in  \Gamma_{\infty}(B)$ with
$L_p\not =0$, we have
$$
G_{m-2}\cdots G_{1}\big(\ld(L,L)\big)(p)\neq 0.
$$
When such an $m$ does not exist, we then set $c^{(s)}(B,p)=\infty$.
We define
\begin{equation}\begin{split}
  c^{(s)}(M,p)=\sup\limits_{B}\{c^{(s)}(B,p): \  B\ \text{ is an $s$-dimensional subbundle of} \ T^{1,0}M
  \}.
 \end{split} \end{equation}
We call $ c^{(s)}(B,p)$ the Levi-form type of $B$ at $p$ and
$c^{(s)}(M,p)$ the $s$-Levi form type of $\p D$ at $p$. When such an
$m$ does not exist, we define $c^{(s)}(M,p)=\infty$. For  an $L\in
\Gamma_{\infty}(B)$ with $L_p\not =0$, we similarly have the notion
of $c_{L}(M,p)$.

\medskip
We finally define the  $s$- regular contact type $a^{(s)}(M,p)$ as
follows:
 \begin{equation}\begin{split}
  a^{(s)}(M,p)=\sup\limits_{X}\big\{\ell|\ &\exists  \text{ an $s$-dimensional
  complex submanifold}\ X\\
  &\text{whose order of contact  with $M$ at $p$ is $\ell$}\big\}.
 \end{split} \end{equation}

Here we remark that the order of contact of $X$ with $M$ at $p$ is
defined as the order of vanishing of $\rho|_X$ at $p$.
\bigskip

In  \cite{Kohn1}, when $n=2$, Kohn showed that
$t^{(1)}(M,p)=c^{(1)}(M,p)=a^{(1)}(M,p)$. Bloom-Graham \cite{BG1}
and Bloom \cite{Bl1} proved   that for any smooth real hypersurface
$M\subset\CC^n$ with $p\in M$,
$$t^{(n-1)}(M,p)=c^{(n-1)}(M,p)=a^{(n-1)}(M,p).$$ And for any $1\leq s\leq n-2$, Bloom in \cite{Bl2}
observed that $a^{(s)}(M,p)\leq c^{(s)}(M,p)$ and $a^{(s)}(M,p)\leq
t^{(s)}(M,p)$. For  these results to hold there is no need  to
assume  the pseudoconvexity  of $M$. However, the following example
of Bloom shows that for $n\geq  3$, when $M$ is not pesudoconvex, it
may happen that $a^{(s)}(M,p)< c^{(s)}(M,p)$ and $a^{(s)}(M,p)<
t^{(s)}(M,p)$ for $1\leq s\leq n-2$.

\begin{example}[Bloom, \cite{Bl2}]\label{Bl-example} Let $\rho=2\text{Re}(w)+(z_2+\ov{z_2}+|z_1|^2)^2$
and let $M=\{(z_1,z_2,w)\in \mathbb{C}^3|\ \rho=0\}$. Let $p=0$.
Then $a^{(1)}(M,0)=4$ but $c^{(1)}(M,0)=t^{(1)}(M,0)=\infty$ .
\end{example}

To see the $M$ in Example \ref{Bl-example} is not pseudoconvex near
$p=0$, we notice that a real normal direction of $M$ near $0$ is
given by  $\frac{\p}{\p u}$ with $u=\h{Re}(w)$. Let $\psi(\xi):
\Delta\ra \CC^3$ be a holomorphic disk that is smooth up to the
boundary such that $\psi=(\psi^*,0)$ with $\psi^*$ being attached to
the Heisenberg hypersurface in $\CC^2$ defined by
$z_2+\ov{z_2}+|z_1|^2=0$ and $\psi(1)=0$. By the Hopf lemma for
pseudoconvex domains \cite{BER}, $\psi$ would have a non-zero
derivative along $u$-direction, which is a contradiction. Let
$L=\frac{\p}{\p z_1}-\ov{z_1}\frac{\p}{\p z_2}.$ Then $L$ is a CR
vector field along $M$. Notice that no matter how many times we
perform the Lie bracket for $L$ and $\ov{L}$, we get a vector field
without components in $\frac{\p}{\p w}$ and $\frac{\p}{\p \ov{w}}$,
we see that $t^{(1)}_L(M,p)=\infty$. One also computes by
(\ref{004}) that
$$\ld(L,L)=2(z_2+\ov{z_2})+2|z_1|^2.$$
Thus $L(\ld(L,L))=\ov{L}(\ld(L,L))\equiv 0.$ Hence,
$c_L(M,p)=\infty$.  (One can also conclude the non-pseudoconvexity
of $M$ near $0$ by (\ref{004}).) Thus
$t^{(1)}(M,0)=c^{(1)}(M,0)=\infty.$ $a^{(1)}(M,0)$   is at least 4
that is attained by the smooth holomorphic curve $(\xi,0,0)$.
Suppose $\phi(\xi)=(\phi_1(\xi),\phi_2(\xi),\phi_3(\xi))$ be a
smooth holomorphic curve with maximum order of contact with $M$ at
$0$. Assume $a^{(1)}(M,0)>4$. Then the vanishing order of
\begin{equation} \label{005}
\begin{split}h&=2\h{Re}(\phi_3)+(2\h{Re}(\phi_2)+|\phi_1|^2)^2=2\h{Re}(\phi_3)
+4\h{Re}^2(\phi_2)+
|\phi_1|^4+4\h{Re}(\phi_2)|\phi_1|^2\\
&=2\h{Re}(\phi_3+\phi_2^2)+4|\phi_2|^2+
|\phi_1|^4+2(\phi_2+\ov{\phi_2})|\phi_1|^2
\end{split}
\end{equation}
 is at least 5 at $0$. Hence $\phi_3=-\phi_2^2 \ \
 \h{mod}(|\xi|^5)$,
 $\phi_2=0\ \h{mod}(|\xi|^2)$ and
 $$h=4|\phi_2|^2+2(\phi_2+\ov{\phi_2})|\phi_1|^2+|\phi_1|^4=0\ \h{mod}(|\xi|^5).$$
 Since $\phi$ is a smooth curve, it apparently follows that
 $\phi_1=a_1\xi+O(\xi^2)$ with $a_1\not =0$ and
$\phi_2=a_2\xi^k+O(|\xi|^{k+1}$ with $k\ge 2, a_2\not =0.$ Comparing
coefficients of degree $4$ in $\xi$, we see a contradiction. Hence
$a^{(1)}(M,0)=4$, which is  achieved by a smooth holomorphic cure
$\phi(\xi)=(\xi,0,0)$.

With the pseudoconvexity assumption of  $M$, Bloom in \cite{Bl2}
showed that when $M\subset {\mathbb C}^3,$
$a^{(1)}(M,p)=c^{(1)}(M,p)$. Motivated by this result, Bloom in 1981
\cite{Bl2} formulated the following famous conjecture:

\begin{conj}\label {bloom-conj} Let $M\subset \mathbb{C}^n$ be a pseudoconvex real hypersurface with $n\geq 3$. Then for any $1\leq s\leq n-2$ and $p\in M$,
\begin{equation*}\begin{split}
t^{(s)}(M,p)=c^{(s)}(M,p)=a^{(s)}(M,p).
\end{split} \end{equation*}

\end{conj}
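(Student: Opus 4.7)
The plan is to establish the two reverse inequalities $c^{(s)}(M,p)\le a^{(s)}(M,p)$ and $t^{(s)}(M,p)\le a^{(s)}(M,p)$ under pseudoconvexity; the opposite inequalities are due to Bloom \cite{Bl2} and hold without pseudoconvexity. I would fix an $s$-dimensional subbundle $B\subset T^{(1,0)}M$ near $p$ realizing the relevant supremum, choose a local frame $L_1,\dots,L_s$ of $B$, and work in normal coordinates centered at $p$ with defining function $\rho$. The central tool is the Helffer--Nourrigat \cite{HN} and Baouendi--Rothschild \cite{BR} normalization of the non-integrable system $\{L_j,\ov{L_j}\}$, which together with Theorem \ref{cor2.3} of the present paper converts commutator and Levi-form vanishing conditions into concrete vanishing orders of $\rho$ along explicit holomorphic directions.

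The first step is to prove $t^{(s)}(M,p)=c^{(s)}(M,p)$ using the Cartan identity $\ld(X,\ov Y)=d\theta(X,\ov Y)=\langle\theta,[X,\ov Y]\rangle$. An induction on bracket length, together with $\langle L,\theta\rangle=0$ for $L\in\G_\infty(B)$ and the Lie-derivative formula for $\theta$, matches the pairing $\langle F,\p\rho\rangle(p)$ of a length-$\mu$ iterated bracket $F$ with a length-$(\mu-2)$ iterated $\mathcal{M}_1(B)$-derivative of $\ld(L,L)$ at $p$; the pure $\ld(L,L)$ piece suffices because the Hermitian form is recovered from its diagonal by polarization. This comparison is purely differential and needs no pseudoconvexity.

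The crux is $c^{(s)}(M,p)\le a^{(s)}(M,p)$. Apply the normalization of \cite{HN,BR} to the frame $\{L_j,\ov{L_j}\}$ at $p$ to obtain coordinates $(z,w)=(z_1,\dots,z_s,w)$ on $\CC^n$ in which $B|_0$ is spanned by the $z$-directions and
\begin{equation*}
\rho=2\,\h{Re}(w)+P(z,\ov z)+R(z,\ov z,\h{Im}(w)),
\end{equation*}
with $P$ a real polynomial of weighted degree at most $c^{(s)}(M,p)$ carrying the pure $B$-jet of the Levi form and $R$ of strictly higher weight. Pseudoconvexity forces $P$ to be plurisubharmonic in $z$. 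Theorem \ref{cor2.3}, applied to $\ld(L,L)$ along the distribution generated by $\{L_j,\ov{L_j}\}$, should then convert the vanishing of every iterated derivative of length $\le c^{(s)}(M,p)-3$ of $\ld(L,L)$ at $p$ into the vanishing of every Taylor coefficient of $P$ of weighted degree strictly less than $c^{(s)}(M,p)$. One then builds an $s$-dimensional complex submanifold by setting $w=\Phi(z)$ and solving $\rho(z,\Phi(z),\ov z,\ov{\Phi(z)})=O(|z|^{c^{(s)}(M,p)})$ order by order; plurisubharmonicity of $P$ with the high vanishing order just obtained would force the formal solution to produce a genuine holomorphic $\Phi$, yielding a complex submanifold of contact order at least $c^{(s)}(M,p)$.

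The main obstacle, and the reason Bloom's conjecture remains open for $s\ge 2$ with $n\ge 4$, is precisely this last construction. When $s=1$ (Bloom's theorem in $\CC^3$) the problem reduces to a one-variable Taylor coefficient comparison, and when $s=n-1$ the Frobenius integrability of $T^{(1,0)}M$ modulo the contact direction closes the argument cleanly. For intermediate $s$ the mixed brackets $[L_j,\ov{L_k}]$ with $j\ne k$ re-enter directions outside $B$ that the pure Levi-form data along $B$ does not directly control, so the plurisubharmonicity of $P$ must be leveraged globally across all of these mixed directions to rule out obstructions to the order-by-order construction of $\Phi$. Overcoming this would presumably require a refinement of Theorem \ref{cor2.3} that tracks mixed-direction derivatives along the full non-integrable system $\{L_j,\ov{L_k}\}_{j,k=1}^{s}$ simultaneously, and it is exactly at this point where the currently available partial results are stuck.
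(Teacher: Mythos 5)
This statement is Conjecture \ref{bloom-conj}, not a theorem; the paper has no proof of it and explicitly states that it remains open for $n\ge 4$. So there is nothing in the paper to compare your sketch against. What the paper does prove, via \cite{HY}, is the special case $s=n-2$ (Theorem \ref{mainthm}), and hence the full conjecture only when $n=3$ (Theorem \ref{thm-dim-three}).

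Beyond the obvious point that one cannot ``prove'' an open conjecture, your sketch contains a substantive error in the first step. You assert that the identity $t^{(s)}(M,p)=c^{(s)}(M,p)$ follows from a ``purely differential'' bookkeeping via the Cartan formula and ``needs no pseudoconvexity.'' That is false. The equality $t^{(s)}(B,p)=c^{(s)}(B,p)$ under pseudoconvexity is itself the (generalized) D'Angelo conjecture, Conjecture \ref{d'angelo-conj}, which is also open in general; without pseudoconvexity the paper's Examples \ref{exa} and \ref{exa1} exhibit a fixed CR vector field $L$ on a real hypersurface in $\CC^3$ with $t_L(M,0)=\infty$ (respectively $6$) while $c_L(M,0)=4$. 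The obstruction is exactly what the Cartan formula does not see: iterated brackets such as $[L,[L,\ov L]]$ are not simply $L(\ld(L,L))$ up to the contact pairing; there are lower-order correction terms whose cancellation is a genuinely nontrivial matter and is where the pseudoconvexity hypothesis (and the hard part of \cite{HY}, \cite{CYY}, \cite{DA2}, \cite{HY3}) enters. Even in the pseudoconvex case, what D'Angelo actually proved (Theorem \ref{Da10}) is the one-sided estimate $c_L\le\max\{t_L,\,2t_L-6\}$, not equality.

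You do correctly acknowledge, in the final paragraph, that the step $c^{(s)}(M,p)\le a^{(s)}(M,p)$ --- building an $s$-dimensional complex submanifold with high contact order from the weighted-homogeneous model of $\rho$ --- is the crux, and that the mixed brackets $[L_j,\ov{L_k}]$ with $j\neq k$ are precisely what makes the intermediate range $2\le s\le n-3$ inaccessible at present. That diagnosis is consistent with the state of the art. But since you also leave the first equality unproved (and misdiagnose it as trivial), the proposal does not constitute a proof of any piece of the conjecture, and should not be read as such.
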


In a related work, D'Angelo in 1986 conjectured that under
pseudo-convexity assumption of $M$, one should have
$t^{(1)}_L(M,p)=c^{(1)}_L(M,p)$.

More generally, we formulate the following generalized D'Angelo
Conjecture:

\begin{conj}\label {d'angelo-conj} Let $M\subset \mathbb{C}^n$ be a
 pseudoconvex real hypersurface with $n\geq 3$.
 Then for any $1\leq s\leq n-2$, $p\in M$ and a smooth complex
 vector subbundle $B$ of $T^{(1,0)}M$, it holds that
$$t^{(s)}(B,p)=c^{(s)}(B,p).$$
\end{conj}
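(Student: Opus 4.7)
The plan is to establish the two inequalities $t^{(s)}(B,p)\leq c^{(s)}(B,p)$ and $c^{(s)}(B,p)\leq t^{(s)}(B,p)$ separately. The first direction holds without pseudoconvexity and follows from iterating a Cartan-type formula. Since $\langle X,\theta\rangle\equiv 0$ for any $X\in\mathcal{M}_1(B)$ (the fields being of pure type $(1,0)$ or $(0,1)$), one has $X(\langle\theta,V\rangle)=\langle\theta,[X,V]\rangle+d\theta(X,V)$ for any tangent $V$. Applied inductively to $V=[L,\ov{L}]$ and using $\ld(L,L)=\langle\theta,[L,\ov{L}]\rangle$, one obtains
$$G_{m-2}\cdots G_1\bigl(\ld(L,L)\bigr)(p)=\bigl\langle\theta,[G_{m-2},[G_{m-3},\ldots,[G_1,[L,\ov{L}]]\ldots]]\bigr\rangle(p)+R(p),$$
where $R$ is a sum of terms each containing a factor $\langle\theta,C\rangle(p)$ for some commutator $C\in\mathcal{M}_j(B)$ of length $j<m$. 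If $c^{(s)}(B,p)=m$, the left-hand side is nonzero for some choice of $G_j$ and $L$, so either the main commutator term or one of the shorter commutators in $R$ has nonzero $\theta$-pairing at $p$, yielding $t^{(s)}(B,p)\leq m$.

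For the reverse inequality $c^{(s)}(B,p)\leq t^{(s)}(B,p)$, pseudoconvexity must enter the picture. The program has three steps. First, use the Jacobi identity to rewrite an arbitrary length-$m$ commutator of fields in $\mathcal{M}_1(B)$ as a $C^\infty(M)$-linear combination of right-nested commutators $[Y_m,[Y_{m-1},\ldots,[Y_2,Y_1]\ldots]]$. Second, note that if $Y_1$ and $Y_2$ share the same type, then $[Y_2,Y_1]$ is of that type and is annihilated by $\theta$; so the nontrivial contributions to $\langle\theta,\cdot\rangle(p)$ come from innermost brackets of mixed type $[L,\ov{L'}]$ with $L,L'\in\Gamma_\infty(B)$, which by the iterated Cartan identity correspond to $(m-2)$-fold derivatives $G_{m-2}\cdots G_1\bigl(\ld(L,L')\bigr)(p)$. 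Third, invoke pseudoconvexity: since $\ld$ is semi-positive definite on $B$, the pointwise Cauchy--Schwarz inequality $|\ld(L,L')|^2\leq \ld(L,L)\ld(L',L')$, combined with polarization, propagates vanishing of the diagonal entries $\ld(L,L)$ to order $m-3$ at $p$ to vanishing of every off-diagonal entry $\ld(L,L')$ to the same order. To make the bookkeeping between commutator length and derivative order tight, I would apply the Helffer--Nourrigat normalization theorem \cite{HN}, in the refined form of Boauendi--Rothschild \cite{BR}, which places a local basis of $\mathcal{M}_1(B)$ into a canonical form from which commutator weights can be read off Taylor coefficients directly.

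The main obstacle, and the reason the conjecture is still open for $n\geq 4$ and $1\leq s\leq n-2$, lies in the Jacobi reduction above when $s<n-1$. In that regime, the intermediate bracket $[Y_2,Y_1]$ of two $(1,0)$-fields in $B$ produces a $(1,0)$ vector that may lie outside $B$, and outer brackets such as $[Y_3,[Y_2,Y_1]]$ with $Y_3\in\ov{\mathcal{M}_1(B)}$ then involve the Levi form between $B$ and a complement of $B$ in $T^{(1,0)}M$. Pseudoconvexity does give semi-positivity of the full Levi form on $T^{(1,0)}M$, but the Cauchy--Schwarz propagation confined to $B$ cannot by itself recover information about commutators that escape through this complementary $(1,0)$ direction. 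Bloom's argument in the case $n=3$, $s=1$ evades this difficulty because the complement of $B$ in $T^{(1,0)}M$ is a single complex line and a one-step reduction succeeds by hand. A uniform approach in higher dimensions would seem to require either a new geometric inequality relating the Levi form on $B$ to its values on a larger subbundle, or a substantial strengthening of the Helffer--Nourrigat normal form that simultaneously tracks vanishing contributions from complementary directions.
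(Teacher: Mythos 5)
This statement is a \emph{conjecture} in the paper, not a theorem: the authors explicitly leave it open, with only the case $n=3$, $s=1$ settled in \cite{CYY} and the partial results of D'Angelo recorded in Theorem~\ref{Da10}. There is accordingly no proof in the paper to compare against, and you are right to present a program plus an analysis of obstructions rather than claim a proof.

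However, your proposal contains a genuine error in the direction you label ``easy.'' You assert that $t^{(s)}(B,p)\leq c^{(s)}(B,p)$ holds without pseudoconvexity via the iterated Cartan identity, but the paper's own Examples~\ref{exa} and~\ref{exa1} refute this directly: with $B=\h{span}\{L\}$ (so $s=1$), Example~\ref{exa} gives $t_L(M,0)=\infty$ while $c_L(M,0)=4$, and Example~\ref{exa1} gives $t_L(M,0)=6$ while $c_L(M,0)=4$; in both cases $t^{(1)}(B,0)>c^{(1)}(B,0)$. The gap is in your claim that the remainder $R$ consists only of terms carrying a factor $\langle\theta,C\rangle(p)$ for commutators $C\in\mathcal{M}_j(B)$ of length $j<m$. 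Each application of $X\langle\theta,V\rangle=\langle\theta,[X,V]\rangle+d\theta(X,V)$ also produces the term $d\theta(X,V)$, and when $V$ is a higher-order bracket its tangential $(1,0)\oplus(0,1)$ component generically lies \emph{outside} $B\oplus\ov{B}$ once $s<n-1$. The resulting Levi-form pairings $\ld(X,\,\cdot\,)$ with one slot escaping $B$ are not $\theta$-pairings of commutators built from $\mathcal{M}_1(B)$, and these are exactly the terms that allow $c$ to fall below $t$ in the non-pseudoconvex examples. Note that this is precisely the ``escape from $B$'' phenomenon you diagnose as the obstacle for $c^{(s)}(B,p)\leq t^{(s)}(B,p)$; it undermines your claimed easy direction to the same degree, so neither inequality between $t^{(s)}(B,p)$ and $c^{(s)}(B,p)$ is cheap. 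The only inequalities of this flavor in the literature are Bloom's $a^{(s)}\leq t^{(s)}$ and $a^{(s)}\leq c^{(s)}$ (which do not relate $t$ and $c$ to each other) and D'Angelo's $c_L(M,p)\leq\max\{t_L(M,p),2t_L(M,p)-6\}$ under pseudoconvexity (Theorem~\ref{Da10}), which runs opposite to your claim and with a worse constant. Your three-step plan for $c\leq t$ and your explanation of why $n\geq 4$ resists a uniform argument are a fair account of the state of the art, but, as you concede, they do not amount to a proof.
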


If confirmed to be true, the generalized D'Angelo conjecture would
imply $t^{(s)}(M,p)=c^{(s)}(M,p)$.

40 years after Bloom formulated his conjecture, it was completely
settled in the case of complex dimension three in \cite{HY}. More
generally, the following theorem was proved in \cite{HY}:
\begin{thm}\label{mainthm}\cite{HY}
Let $M\subset \mathbb{C}^n$ be a smooth pseudoconvex real
hypersurface with $n\geq 3$. Then for  $s= n-2$ and any $p\in M$, it
holds that
\begin{equation*}\begin{split}
t^{(n-2)}(M,p)=a^{(n-2)}(M,p)=c^{(n-2)}(M,p).
\end{split} \end{equation*}

\end{thm}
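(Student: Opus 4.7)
The inequalities $a^{(n-2)}(M,p)\le t^{(n-2)}(M,p)$ and $a^{(n-2)}(M,p)\le c^{(n-2)}(M,p)$ hold by Bloom's general observations without invoking pseudoconvexity, so the real work lies in the reverse direction. Since both $t^{(n-2)}(M,p)$ and $c^{(n-2)}(M,p)$ are defined as suprema over smooth $(n-2)$-dimensional subbundles $B\subset T^{(1,0)}M$, the plan is to show that for every such $B$ one can produce an $(n-2)$-dimensional complex submanifold $X\ni p$ whose order of contact with $M$ at $p$ is at least $\max\{t^{(n-2)}(B,p),\,c^{(n-2)}(B,p)\}$. A companion step is the bundle-level equality $t^{(n-2)}(B,p)=c^{(n-2)}(B,p)$---the generalized D'Angelo conjecture in the case $s=n-2$---which I would establish by iterating the Jacobi identity and exploiting the positivity of the Levi form, so that the $t$-bound and the $c$-bound can be reduced to a single analytic construction.

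\textbf{Normal form.} Translate $p$ to the origin and choose coordinates $(z_1,\dots,z_{n-2},z_{n-1},w)$ so that $M=\{2\operatorname{Re}(w)=\phi(z,\ov{z},\operatorname{Im}(w))\}$ with $\phi$ vanishing to second order at $0$. After a change of frame we may assume $B$ is generated near $0$ by CR vector fields $L_1,\dots,L_{n-2}$ with $L_j|_0=\partial/\partial z_j|_0$. The structural feature of the case $s=n-2$ is that $B$ is a hyperplane subbundle of $T^{(1,0)}M$, whose quotient is complex one-dimensional; this matches the two holomorphic degrees of freedom available in an $(n-2)$-dimensional complex submanifold of $\mathbb{C}^n$ (the components $z_{n-1}$ and $w$) against the single non-$B$ CR direction plus the transverse ambient direction. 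I would then apply the Helffer--Nourrigat normalization \cite{HN}, in the form refined by Baouendi--Rothschild \cite{BR}, to the non-integrable system $\{L_1,\dots,L_{n-2},\ov{L_1},\dots,\ov{L_{n-2}}\}$. This normal form is precisely what converts the intrinsic data defining $t^{(n-2)}(B,0)$ and $c^{(n-2)}(B,0)$---iterated Lie brackets and iterated $(L_j,\ov{L_k})$-derivatives of the Levi form---into clean Taylor-coefficient identities for $\phi$ in the variables $z'=(z_1,\dots,z_{n-2})$, holding uniformly in the parameters $(z_{n-1},w)$.

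\textbf{Construction.} Parametrize a candidate submanifold as
\[
X=\{(z',h_1(z'),h_2(z'))\,:\,z'\in\mathbb{C}^{n-2}\text{ near }0\},
\]
with $h_1,h_2$ holomorphic and vanishing at $0$, so $T_0X=B|_0$. Its order of contact with $M$ is the order of vanishing at $z'=0$ of
\[
\rho|_X(z')=-2\operatorname{Re}(h_2(z'))+\phi\bigl(z',h_1(z'),\ov{h_1(z')},\operatorname{Im}(h_2(z'))\bigr).
\]
Pure holomorphic Taylor terms in $z'$ (with their conjugates) can be absorbed into $2\operatorname{Re}(h_2)$; mixed terms involving the $z_{n-1}$ or $\ov{z_{n-1}}$ slot can be absorbed into $h_1$. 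What remains is the Taylor expansion in $(z',\ov{z'})$ of the restriction of the Levi form of $M$ to $B$-directions, plus higher-order corrections. Pseudoconvexity enters crucially here: this restricted Hermitian form is positive semi-definite, so once $c^{(n-2)}(B,0)=m$ forces the $(z',\ov{z'})$-expansion to start at order $m-2$, the surviving positive semi-definite piece can be written as a sum $\sum_\alpha|f_\alpha(z')|^2$ modulo higher-order terms, and these squared moduli can be completed and shifted into the remaining holomorphic freedom of $h_2$, forcing $\rho|_X$ to vanish to order at least $m$ at $0$.

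\textbf{Main obstacle.} The technical heart is the passage, via the Helffer--Nourrigat normal form, from the intrinsic vanishing of iterated commutators or of $(L_j\ov{L_k}\rho)$-expressions to concrete Taylor-coefficient identities for $\phi$, because non-integrability of $B$ produces correction terms at every Lie bracket and these must be tracked through $m$ iterations. The completion-of-squares step further requires quantitative use of positive semi-definiteness rather than mere vanishing-order information, which is exactly where pseudoconvexity is invoked nontrivially. Finally, the auxiliary identity $t^{(n-2)}(B,p)=c^{(n-2)}(B,p)$ under pseudoconvexity is itself delicate: a nonzero commutator-type obstruction at level $m$ has to be matched, via the Jacobi identity together with the pseudoconvex positivity, against a nonvanishing Levi-form derivative of the same level, and it is here that I expect the main technical work to live.
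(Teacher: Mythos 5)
The theorem you are asked to prove is only \emph{cited} in this paper, not proved here: the text says ``the following theorem was proved in \cite{HY}'' and gives no argument. So there is no in-paper proof to compare your proposal against, and the comparison must be against the substance of the Huang--Yin argument itself.

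Your overall skeleton --- use Bloom's free inequalities $a^{(n-2)}\le t^{(n-2)}$, $a^{(n-2)}\le c^{(n-2)}$, pass to a normal form, invoke the Helffer--Nourrigat/Baouendi--Rothschild normalization of the non-integrable system generated by $B$, and then try to build an $(n-2)$-dimensional graph $X=\{(z',h_1(z'),h_2(z'))\}$ achieving the required order of contact --- is the right general landscape. But there are genuine gaps, and the most serious one is structural. As written, nothing in your argument actually uses $s=n-2$: the ``dimension matching'' you point to, namely that $X$ has $n-s$ holomorphic unknowns $h_1,\dots,h_{n-s}$ while there are $(n-1)-s$ complementary CR directions plus one transverse ambient direction, gives $n-s=n-s$ for \emph{every} $s$, so it cannot be the special feature of $s=n-2$. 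If your scheme of ``absorb pure terms into $\mathrm{Re}(h_2)$, absorb $z_{n-1}$-mixed terms into $h_1$, handle the rest by positivity'' closed up as described, it would prove the full Bloom conjecture for all $1\le s\le n-2$, which remains open for $s\le n-3$ in $n\ge 5$. The missing idea is exactly the one Huang--Yin extract from the hypothesis $s=n-2$: the complementary CR direction is complex one-dimensional, which permits a decoupling/normalization of the Levi form in that single bad direction (effectively a weighted diagonalization with pseudoconvexity and a Cauchy--Schwarz estimate) that has no analogue for a higher-dimensional complement. Your proposal names the right tools but does not contain this step, and without it the ``absorption'' of mixed terms into $h_1$ is just the statement that a certain nonlinear system of Taylor-coefficient equations is solvable --- which is the whole difficulty, not a routine reduction.

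Two more local problems. First, the sum-of-squares claim is not available in the generality you use it: a positive semi-definite Hermitian polynomial in $(z',\ov{z'})$ is in general \emph{not} a finite sum of squared moduli of holomorphic polynomials (this failure of the Hermitian analogue of Hilbert's observation is well documented, e.g.\ in D'Angelo's work), so ``the surviving positive semi-definite piece can be written as $\sum_\alpha|f_\alpha(z')|^2$'' is unjustified; moreover, even when such a decomposition exists, $|f_\alpha(z')|^2$ is not pluriharmonic, so it cannot be ``shifted into the remaining holomorphic freedom of $h_2$'' --- $2\,\mathrm{Re}(h_2)$ can only absorb pluriharmonic terms. Second, the ``companion step'' $t^{(n-2)}(B,p)=c^{(n-2)}(B,p)$ for every subbundle $B$ is the generalized D'Angelo conjecture in degree $n-2$ and is not proved by ``iterating the Jacobi identity and exploiting positivity''; in the literature the per-bundle/per-vector-field version is known only in much more restricted settings (for instance \cite{CYY} in $\CC^3$, building on \cite{HY}), and the Huang--Yin theorem you are asked to prove establishes the equality of the supremal types, not the bundle-level equality. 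Assuming the latter as a stepping stone is assuming a result at least as deep as the one to be proved.
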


In particular,  we answered affirmatively  the Bloom conjecture in
the case of complex dimension three (namely, $n=3$):

\begin{thm}\label{thm-dim-three}\cite{HY} The Bloom conjecture holds in the case of complex dimension three.
 Namely, for   a smooth pseudoconvex real hypersurface $M\subset \mathbb{C}^3$  and $p\in M$, it holds that
\begin{equation*}\begin{split}
t^{(1)}(M,p)=a^{(1)}(M,p)=c^{(1)}(M,p).
\end{split} \end{equation*}
\end{thm}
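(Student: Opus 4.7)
The theorem is the specialization of Theorem \ref{mainthm} to $n = 3$, in which case $s = 1$ coincides with $n - 2$. At a formal level the statement is therefore a direct corollary of Theorem \ref{mainthm}, so one could simply cite that result. Below I sketch how I would attack the proof in this three-dimensional case directly, since the ideas are cleanest here.

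I would first invoke Bloom's general observations, which give $a^{(1)}(M,p) \le t^{(1)}(M,p)$ and $a^{(1)}(M,p) \le c^{(1)}(M,p)$ on any smooth real hypersurface, with no pseudoconvexity required. The content of the theorem is thus the two reverse inequalities $t^{(1)}(M,p) \le a^{(1)}(M,p)$ and $c^{(1)}(M,p) \le a^{(1)}(M,p)$, and Example \ref{Bl-example} shows that both genuinely require the pseudoconvexity hypothesis, already in dimension three.

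Assume $m := t^{(1)}(M,p) < \infty$ and let $B$ be a rank-one subbundle of $T^{(1,0)}M$ attaining the commutator type $m$ at $p$, spanned locally by a nonzero $(1,0)$ vector field $L$. The plan has four steps. First, pass to local holomorphic coordinates $(z_1, z_2, w)$ with $p = 0$ and $M = \{\,2\,\mathrm{Re}\,w + P(z,\ov{z}) + \mathrm{O}(|w|(|z|+|w|)) = 0\,\}$. Second, apply the Helffer-Nourrigat normalization and its Boauendi-Rothschild refinement to the non-integrable pair $\{L, \ov{L}\}$ spanning $B \oplus \ov{B}$, reducing to a weighted-homogeneous model in which the commutator length $m$ is witnessed by a single weighted-homogeneous polynomial $P_m$. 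Third, use pseudoconvexity (plurisubharmonicity of $P_m$) together with the rank-one constraint to force $P_m$ to contain a term of the form $|h(z_1, z_2)|^2$ for some nonzero holomorphic polynomial $h$. Fourth, take the component through the origin of $\{h = 0\} \cap \{w = 0\}$ as a smooth holomorphic curve $\phi$ and verify that $\phi$ has contact of order $m$ with $M$, so that $a^{(1)}(M,p) \ge m$.

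The main obstacle is the third step: a plurisubharmonic polynomial does not in general admit such a holomorphic-square factorization, and the argument must exploit both the vanishing of $L$-commutators of length less than $m$, which restricts where $P_m$ can be nonzero along the distinguished $L$-direction, and the rank-one nature of $B$ inside the two-dimensional $T^{(1,0)}M$, effectively reducing to Kohn's original two-dimensional equivalence. This is precisely the step that breaks when $s < n-2$ and is the reason the Bloom conjecture remains open for $n \ge 4$ outside the codimension-one case handled in \cite{HY}. The inequality $c^{(1)}(M,p) \le a^{(1)}(M,p)$ is treated by the same mechanism, since under pseudoconvexity the Levi-form vanishing of order $m - 2$ along $L$ is essentially equivalent to commutator vanishing of order $m - 1$ along $L$.
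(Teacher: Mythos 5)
Your first paragraph captures exactly the paper's own treatment: Theorem \ref{thm-dim-three} is stated as the $s = 1 = n-2$ specialization of Theorem \ref{mainthm}, both cited from \cite{HY}, and this survey gives no independent proof beyond that observation. The four-step sketch in your remaining paragraphs goes beyond what the paper contains; it is a plausible outline of how the argument in \cite{HY} might proceed (and correctly identifies that only the inequalities $t^{(1)}\le a^{(1)}$ and $c^{(1)}\le a^{(1)}$ need pseudoconvexity), but it cannot be checked against a proof that the present paper does not reproduce.
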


The solution to D'Angelo's conjecture in the case of complex
dimension three was  obtained   in \cite{CYY}, fundamentally based
on  results obtained in \cite{HY}.

\begin{thm}\label{thm-dim-three-CYY}\cite{CYY}
 For   a smooth pseudoconvex real hypersurface $M\subset \mathbb{C}^3$
  and $p\in M$, for any smooth CR vector field $L$ along $M$ with $L|_p\not =0$, it holds that
$$t^{(1)}_L(M,p)=c^{(1)}_L(M,p).$$
\end{thm}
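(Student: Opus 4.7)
The plan is to deduce this single vector field equality from the bundle level theorem of \cite{HY} (Theorem \ref{thm-dim-three}) together with a normal form reduction that uses the Helffer-Nourrigat and Baouendi-Rothschild theorems advertised in the abstract. Throughout I write $B$ for the smooth rank one subbundle of $T^{(1,0)}M$ generated by $L$ in a neighborhood of $p$, so that $t^{(1)}_L(M,p)=t^{(1)}(B,p)$ and $c^{(1)}_L(M,p)=c^{(1)}(B,p)$, and I work in a defining-function coordinate chart for $M$.

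I would first dispose of the easy direction, which needs no pseudoconvexity. Writing $\lambda(L,L)=\langle \theta,[L,\bar L]\rangle$ and iterating the Cartan-type identity $X(\langle \theta,Y\rangle)=\langle\theta,[X,Y]\rangle+d\theta(X,Y)$ with $X,Y$ taken from $\{L,\bar L\}$ and their iterated brackets, every $k$-fold derivative of $\lambda(L,L)$ along $L,\bar L$ becomes a linear combination of pairings $\langle\theta,C\rangle$ in which $C$ ranges over iterated Lie brackets of $L,\bar L$ of length at most $k+2$, together with lower order contributions coming from $d\theta$ paired against shorter brackets. Comparing this expansion with the definitions of $t^{(1)}_L$ and $c^{(1)}_L$ produces the inequality between the two types in one direction automatically, so the task reduces to proving the reverse inequality.

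For the remaining inequality, I would invoke the deep normalization of \cite{HN, BR} to choose coordinates $(z_1,z_2,w)$ centered at $p$ in which the defining function $\rho$ and the vector field $L$ are put simultaneously into a canonical form, and in which both $t^{(1)}_L(M,p)$ and $c^{(1)}_L(M,p)$ can be identified with the order of vanishing at $0$ of specific, explicitly computable combinations of the Taylor coefficients of $\rho$. Pseudoconvexity of $M$ along a neighborhood of $p$ then imposes a family of sign and algebraic constraints on these Taylor coefficients, and they are exactly the constraints that power the proof of Theorem \ref{thm-dim-three} in \cite{HY}. Feeding them into the two Taylor-coefficient expressions should force the commutator invariant and the Levi-form invariant to first become nonzero at the same jet level.

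The main obstacle is extracting equality for the \emph{prescribed} $L$ rather than for the optimal choice: Theorem \ref{thm-dim-three} only equates the suprema over all rank one smooth subbundles $B$, and it is a priori conceivable that different bundles attain the supremum on the commutator side and on the Levi-form side. Ruling out such a bundle-by-bundle mismatch is precisely where the refined vanishing-order theorem for smooth functions along a system of non-integrable vector fields (Theorems \ref{cor1.3} and \ref{cor2.3}), developed in the second half of the paper, must be brought in. This is the step that, in my plan, will require the most care, and it is the content that \cite{CYY} builds on top of \cite{HY} to complete the dimension-three D'Angelo conjecture.
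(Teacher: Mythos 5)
The paper you are reading does not actually prove Theorem \ref{thm-dim-three-CYY}; it only \emph{states} it and attributes it to \cite{CYY}, noting that the proof there rests on the results of \cite{HY}. So there is no in-paper argument to compare against, and what you have written should be judged purely as a free-standing sketch. On those terms it does not hold up, for two reasons.

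First, your ``easy direction'' is not in fact easy, and the iterated Cartan identity does not by itself yield an inequality between $c^{(1)}_L$ and $t^{(1)}_L$. When you expand $Y_k\cdots Y_1\langle\theta,[L,\bar L]\rangle$ via $X\langle\theta,Y\rangle = \langle\theta,[X,Y]\rangle + d\theta(X,Y)$, the $d\theta$ contributions are \emph{not} lower order: they are Levi-form values evaluated on intermediate brackets, and they sit at exactly the same jet level as the top bracket term. This is precisely why D'Angelo, running essentially this computation, only obtains the much weaker estimate $c_L \le \max\{t_L,\,2t_L-6\}$ of Theorem \ref{Da10}, and even that requires pseudoconvexity. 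Moreover Examples \ref{exa} and \ref{exa1} in this paper exhibit non-pseudoconvex hypersurfaces with $c_L < t_L$, so neither $c_L \le t_L$ nor $t_L \le c_L$ is ``automatic'' in the absence of pseudoconvexity. Without isolating and controlling the $d\theta$ cross terms, the first part of your argument does not deliver an inequality.

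Second, the ``hard direction'' is left entirely open. You correctly identify that Theorem \ref{thm-dim-three} only equates suprema over all rank-one subbundles and that a bundle-by-bundle mismatch must be ruled out, but you offer no mechanism to do so; appealing vaguely to Theorems \ref{cor1.3} and \ref{cor2.3} is not enough. Those theorems give arithmetic properties of the vanishing order $\nu_{\mathcal D}$ (multiplicativity, monotonicity under $0\le f\le g$, evenness of $\nu$ for $f\ge 0$), and they are tools one might use in the eventual computation, but they do not by themselves transfer a statement about suprema to a statement about an individual $L$. The actual route in \cite{CYY} passes through the fine structure established in \cite{HY}---the weighted normal form for $\rho$ and the careful tracking of how pseudoconvexity constrains its coefficients at each jet level---rather than being a formal deduction from the statement of Theorem \ref{thm-dim-three}. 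As written, your proposal is a plausible-sounding plan whose two key steps are, respectively, unjustified and absent.
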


The following theorem of D'Angelo also provides a partial solution
to D'Angelo's original  conjecture:

\begin{thm}\cite{DA2}\label{Da10}
\noindent Let $M\subset\CC^n$ be    a smooth pseudoconvex real
hypersurface
  and $p\in M$.

\noindent (1).  For any smooth CR vector field $L$ along $M$ with
$L|_p\not =0$,
  it holds that $c_L(M,p)\le \max\{t_L(M,p),\ 2t_L(M,p)-6\}.$

\noindent (2). If either $c_L(M,p)=4$ or $t_L(M,p)=4$, then both are
$4$.
\end{thm}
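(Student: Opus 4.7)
The plan is to encode both types in terms of the Levi-form components and exploit pseudoconvexity via the Cauchy--Schwarz inequality. Fix a smooth local frame $L=L_{1},L_{2},\ldots,L_{n-1}$ of $T^{(1,0)}M$ near $p$ with $L_{1}=L$, together with the real contact field $T$ normalized by $\langle\theta,T\rangle\equiv 1$; set $\lambda_{ij}=\lambda(L_{i},L_{j})$, so $\lambda_{11}=\lambda(L,L)$. Pseudoconvexity says that the Hermitian matrix $(\lambda_{ij})$ is positive semidefinite, hence $\lambda_{11}\ge 0$ as a smooth function on $M$ and $|\lambda_{1j}|^{2}\le \lambda_{11}\lambda_{jj}$ pointwise.

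First I would derive, by iterated use of Cartan's formula $\langle\theta,[X,Y]\rangle = X\langle\theta,Y\rangle - Y\langle\theta,X\rangle - d\theta(X,Y)$ together with the decomposition $[L,\bar L] = \lambda_{11}T + \sum_{j}(a_{j}L_{j}+b_{j}\bar L_{j})$, an expansion of the contact component $\langle\theta,W\rangle(p)$ of a length-$t$ iterated bracket $W=[X_{t},[X_{t-1},\ldots,[X_{2},X_{1}]\cdots]]$ (with $X_{i}\in\{L,\bar L\}$) as a universal polynomial in the iterated $L,\bar L$ derivatives at $p$ of the Levi entries $\lambda_{1j},\overline{\lambda_{1j}}$ and of the structure coefficients $a_{j},b_{j}$. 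The \emph{diagonal} part (involving only $\lambda_{11}$) is exactly a $(t-2)$-fold $L,\bar L$ derivative of $\lambda_{11}$ at $p$; every \emph{off-diagonal} contribution involves some $\lambda_{1j}$ with $j\neq 1$ and enters only through a cross-bracket $[\cdot,\bar L_{j}]$ that consumes at least one unit of bracket length.

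Combining this expansion with pseudoconvexity gives the bound of part (1). Assume $t_{L}(M,p)=t$, so every length-$(t-1)$ bracket has contact vanishing at $p$ and some length-$t$ bracket has contact nonzero at $p$. The nonzero contact at $p$ comes either from the diagonal term, forcing some $L,\bar L$ derivative of $\lambda_{11}$ of order $\le t-2$ to be nonzero at $p$ (so $c_{L}\le t$), or from an off-diagonal term, forcing some $L,\bar L$ derivative of $\lambda_{1j}$ of order at most $t-4$ to be nonzero at $p$; the ``$t-4$'' reflects both the cross-bracket and the fact that pseudoconvexity forces an extra vanishing of $\lambda_{1j}$ at $p$ beyond its value (namely $\lambda_{1j}(p)=0$ by Cauchy--Schwarz and the critical-point vanishing of the non-negative $\lambda_{11}$ at its minimum $p$). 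The pointwise Cauchy--Schwarz estimate gives $\operatorname{ord}_{p}\lambda_{1j}\ge \operatorname{ord}_{p}\lambda_{11}/2$, so $\operatorname{ord}_{p}\lambda_{11}\le 2(t-4)=2t-8$, i.e., $c_{L}\le 2t-6$. Together these two cases give $c_{L}\le \max\{t,2t-6\}$.

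For part (2), substituting $t_{L}=4$ in part (1) yields $c_{L}\le\max\{4,2\}=4$. The converse $c_{L}=4\Rightarrow t_{L}=4$ follows from the unconditional inequality $t_{L}\le c_{L}$ (a nonzero $L,\bar L$ derivative of $\lambda_{11}$ of order $c_{L}-2$ produces a nonzero contact in some length-$c_{L}$ bracket, modulo corrections killed by the hypothesis that all lower-order $\lambda_{11}$ derivatives vanish) combined with the pseudoconvex dichotomy $t_{L}\in\{2\}\cup\{t_{L}\ge 4\}$, which rules out $t_{L}=3$. The main obstacle throughout is the precise accounting of derivatives absorbed by the cross-brackets and by the forced vanishings at $p$: the sharper constant $-6$ rather than the naive $-4$ reflects that pseudoconvexity provides, beyond the pointwise Cauchy--Schwarz bound, an additional derivative of vanishing on the off-diagonal Levi entries, and correctly matching this against the combinatorics of the iterated-bracket expansion is the technical heart of D'Angelo's inequality.
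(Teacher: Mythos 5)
The paper does not actually prove Theorem \ref{Da10}; it is stated as a citation to D'Angelo's paper [DA2]. So there is no ``paper's proof'' to compare against---I can only assess your argument on its own merits, and I see a genuine gap at the crucial derivative-counting step.

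Your overall strategy (iterate the Cartan identity $\langle\theta,[X,V]\rangle=X\langle\theta,V\rangle-d\theta(X,V)$ from $\langle\theta,[L,\bar L]\rangle=\lambda_{11}$, split into a diagonal contribution involving $L,\bar L$-derivatives of $\lambda_{11}$ and off-diagonal contributions involving $\lambda_{1j}$, and then use the pointwise Cauchy--Schwarz bound $\nu_L(\lambda_{1j})\ge\nu_L(\lambda_{11})/2$ coming from positive semidefiniteness of the Levi matrix) is indeed a sensible route and is consistent with the paper's own Corollary \ref{cor1.5}(2). But the key inequality is the assertion that the off-diagonal contribution to the contact of a length-$t$ bracket involves derivatives of $\lambda_{1j}$ of order at most $t-4$. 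Tracking the Cartan iteration carefully, the $T$-component of $[X_k,V_{k-1}]$ picks up $d\theta(X_k,\bar L_j)=-\lambda_{1j}$ multiplied by the $\bar L_j$-coefficient $b_j^{(k-1)}$ of $V_{k-1}$; this off-diagonal term first enters at length $3$ with zero derivatives, and after $t-3$ more brackets one obtains a term $b_j\,X_t\cdots X_4(\lambda_{1j})$, i.e.\ an order-$(t-3)$ derivative of $\lambda_{1j}$ with coefficient $b_j(p)$. The coefficient $b_j(p)$ (the $\bar L_j$-component of $[L,\bar L]|_p$) cannot in general be normalized to zero, since the tangential part of $[L,\bar L]|_p$ is a genuine invariant. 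So the honest bound from this bookkeeping is $\nu_L(\lambda_{1j})\le t-3$, and combined with Cauchy--Schwarz you get $\nu_L(\lambda_{11})\le 2(t-3)$, i.e.\ $c_L\le 2t-4$, which is weaker than D'Angelo's $2t-6$ by exactly $2$. Your justification for replacing $t-3$ by $t-4$---that ``pseudoconvexity forces an extra vanishing of $\lambda_{1j}$ at $p$ beyond its value''---is a double-count: the vanishing $\lambda_{1j}(p)=0$ is precisely what is already being used on the Cauchy--Schwarz side of the estimate (it is how one knows $\nu_L(\lambda_{1j})\ge 1$); it does not reduce the order of derivative of $\lambda_{1j}$ that can appear in the bracket expansion. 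Some additional structural observation (for instance, that the off-diagonal contribution always carries an extra vanishing factor, or that the genuine survivor after all the $t_L$-vanishing constraints is a product of a $\lambda_{1j}$-derivative with a conjugate) is needed to close this gap and recover $2t-6$; you have not supplied it.

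There are two further weak points in part (2). You invoke ``the unconditional inequality $t_L\le c_L$,'' but this is not unconditional---it is precisely the kind of statement one must prove by controlling the off-diagonal corrections, and indeed the whole content of the D'Angelo conjecture discussed in Section 1.2 is the equality $t_L=c_L$, which is open in general. The parenthetical ``modulo corrections killed by the hypothesis that all lower-order $\lambda_{11}$ derivatives vanish'' is not correct: those corrections involve $\lambda_{1j}$ with $j\ne 1$ and are not controlled by vanishing of $\lambda_{11}$-derivatives alone. Finally, the evenness of $t_L$ in the pseudoconvex case (ruling out $t_L=3$) is true, but you state it as part of a ``dichotomy'' without argument; it follows from $\lambda_{11}(p)=0$, $L\lambda_{11}(p)=\bar L\lambda_{11}(p)=0$ (minimum of a nonnegative function), and $\lambda_{1j}(p)=0$ (Cauchy--Schwarz), and deserves at least a sentence.
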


A recent paper in Huang-Yin \cite{HY3} generalizes Theorem
\ref{Da10}(2) to the case when either  $c_L(M,p)=6$ or $t_L(M,p)=6$.

The following examples show that  the conclusion of the D'Angelo
Conjecture may not hold  when $M$ is not pseudoconvex.

\begin{example}\label{exa}
Let $M$ be a real hypersurface in $\mathbb{C}^3$ with a defining
function
$$
\rho:=-(w+\ov{w})+|z_1|^4+z_1\ov{z_2}+z_2\ov{z_1}.
$$
Suppose $L$ is the tangent vector field of type $(1,0)$ defined by
$$
L=\frac{\p}{\p z_1}-|z_1|^2\frac{\p}{\p
z_2}+(\ov{z_2}+z_1\ov{z_1}^2)\frac{\p}{\p w}.
$$
Then $t_L(M,0)=\infty$ but $c_L(M,0)=4$.
\end{example}

First, a direct computation shows that
$$L\rho=2z_1\ov{z_1}^2+\ov{z_2}-|z_1|^2\ov{z_1}-(\ov{z_2}+z_1\ov{z_1}^2)\equiv 0.$$
 Hence $L$
is indeed  a  CR vector field along $M$. Notice that
\begin{equation*} \begin{split}
\p \ov{\p}\rho&=4|z_1|^2dz_1\wedge d\ov{z_1}+dz_1\wedge
d\ov{z_2}+dz_2\wedge d\ov{z_1},\\
\ld(L,{L})&=4|z_1|^2-|z_1|^2-|z_1|^2=2|z_1|^2.
\end{split} \end{equation*}
Hence
$$
L\ov{L}\ld(L,{L})=2\neq 0.
$$
Hence, we conclude that $c_L(M,0)=4$. We next compute $t_L(M,0)$ as
follows:
\begin{equation*} \begin{split}
[L,\ov{L}]&=\Big[\frac{\p}{\p z_1}-|z_1|^2\frac{\p}{\p
z_2}+(\ov{z_2}+z_1\ov{z_1}^2)\frac{\p}{\p w},\frac{\p}{\p
\ov{z_1}}-|z_1|^2\frac{\p}{\p
\ov{z_2}}+({z_2}+\ov{z_1}{z_1}^2)\frac{\p}{\p \ov{w}}\Big]\\
&=-\ov{z_1}\frac{\p}{\p \ov{z_2}}+2z_1\ov{z_1}\frac{\p}{\p
\ov{w}}-|z_1|^2\frac{\p}{\p \ov{w}}+{z_1}\frac{\p}{\p
{z_2}}-2z_1\ov{z_1}\frac{\p}{\p {w}}+|z_1|^2\frac{\p}{\p {w}}
\end{split} \end{equation*}
Thus
\begin{equation*} \begin{split}
[L,[L,\ov{L}]]&=2\ov{z_1}\frac{\p}{\p \ov{w}}+\frac{\p}{\p
{z_2}}-2\ov{z_1}\frac{\p}{\p {w}}-\ov{z_1}\frac{\p}{\p
\ov{w}}+\ov{z_1}\frac{\p}{\p {w}}-(-\ov{z_1})\frac{\p}{\p {w}})\\
&=\ov{z_1}\frac{\p}{\p \ov{w}}+\frac{\p}{\p {z_2}}.
\end{split} \end{equation*}
Furthermore, we obtain
\begin{equation*} \begin{split}
[L,[L,[L,\ov{L}]]] &=\Big[\frac{\p}{\p z_1}-|z_1|^2\frac{\p}{\p
z_2}+(\ov{z_2}+z_1\ov{z_1}^2)\frac{\p}{\p w},\ \ov{z_1}\frac{\p}{\p
\ov{w}}+\frac{\p}{\p {z_2}}\Big]=0.\\
[\ov{L},[L,[L,\ov{L}]]] &=\Big[\frac{\p}{\p
\ov{z_1}}-|z_1|^2\frac{\p}{\p
\ov{z_2}}+({z_2}+\ov{z_1}{z_1}^2)\frac{\p}{\p \ov{w}},\
\ov{z_1}\frac{\p}{\p \ov{w}}+\frac{\p}{\p {z_2}}\Big]=\frac{\p }{\p
\ov{w}}-\frac{\p }{\p \ov{w}}=0.
\end{split} \end{equation*}
Notice that
$$
[\ov{L},[\ov{L},[L,\ov{L}]]]=-\ov{[L,[L,[L,\ov{L}]]] }=0,\ \
[\ov{L},[{L},[L,\ov{L}]]]=-\ov{[L,[\ov{L},[L,\ov{L}]]] }=0.
$$
Hence $t(L,0)=+\infty$. That $t_L(M,0)=\infty$ can also be seen
geometrically  as follows:

Notice that
$$
L(-2w+2z_1\ov{z_2}+|z_1|^4)=\ov{L}(-2w+2z_1\ov{z_2}+|z_1|^4)=0.
$$
Thus  Re$(L)$, Im$(L)$ as well as  their Lie brackets of any length
are all tangent to the real codimension two submanifold of $\CC^3$
defined by $\{(z_1,z_2,w)\in \mathbb{C}^3:\
2w=2z_1\ov{z_2}+|z_1|^4\}$. Hence the Lie brackets of Re$(L)$ and
Im$(L)$ of any length will always been annihilated by the contact
form $\theta|_0=-\frac{1}{2}({\p w}-{\p \ov{w}})|_0$ at $0$, which
shows that $t(L,0)=+\infty$.

Finally, we present the following example, in which $t_L(M,0)$ and
$c_L(M,0)$ are finite but different.
\begin{example}\label{exa1}
Let $M$ be a real hypersurface in $\mathbb{C}^3$ with a defining
function
$$
\rho:=-(w+\ov{w})+|z_1|^4+z_1\ov{z_2}+z_2\ov{z_1}+|z_2|^{2}.
$$
Suppose $L$ is the tangent vector field of type $(1,0)$ defined by
$$
L=\frac{\p}{\p z_1}-|z_1|^2\frac{\p}{\p
z_2}+(\ov{z_2}+z_1\ov{z_1}^2-|z_1|^2\ov{z_2})\frac{\p}{\p w}.
$$
Then $t_L(M,0)=6$ but $c_L(M,0)=4$.
\end{example}

As in Example \ref{exa}, we have $L\rho\equiv 0$ and thus $L$ is
indeed tangent to $M$. Next
\begin{equation*} \begin{split}
\p \ov{\p}\rho&=4|z_1|^2dz_1\wedge d\ov{z_1}+dz_1\wedge
d\ov{z_2}+dz_2\wedge d\ov{z_1}+dz_2\wedge d\ov{z_2},\\
\ld(L,{L})&=4|z_1|^2-|z_1|^2-|z_1|^2+|z_1|^4=2|z_1|^2+|z_1|^4.
\end{split} \end{equation*}
Hence
$$
L\ov{L}\ld(L,{L})(0)=2\neq 0.
$$
We conclude that $c_L(M,0)=4$. As in Example \ref{exa}, $t_L(M,0)$
can be derived as follows:
\begin{equation*} \begin{split}
[L,\ov{L}] =&-\ov{z_1}\frac{\p}{\p
\ov{z_2}}+2z_1\ov{z_1}\frac{\p}{\p \ov{w}}-|z_1|^2\frac{\p}{\p
\ov{w}}+{z_1}\frac{\p}{\p {z_2}}-2z_1\ov{z_1}\frac{\p}{\p
{w}}+|z_1|^2\frac{\p}{\p
{w}}\\
&+(-\ov{z_1}{z_2}+|z_1|^4)\frac{\p}{\p
\ov{w}}+(z_1\ov{z_2}-|z_1|^4)\frac{\p}{\p {w}}\\
=&-\ov{z_1}\frac{\p}{\p \ov{z_2}}+{z_1}\frac{\p}{\p
{z_2}}-(|z_1|^2-z_1\ov{z_2}+|z_1|^4)\frac{\p}{\p
{w}}+(|z_1|^2-z_2\ov{z_1}+|z_1|^4)\frac{\p}{\p \ov{w}}.
\end{split} \end{equation*}
Thus
\begin{equation*} \begin{split}
[L,[L,\ov{L}]] &=\ov{z_1}\frac{\p}{\p \ov{w}}+\frac{\p}{\p
{z_2}}+3z_1\ov{z_1}^2\frac{\p }{\p \ov{w}}+(\ov{z_2}-2z_1
\ov{z_1}^2)\frac{\p }{\p {w}}-\ov{z_1}\cdot |z_1|^2\frac{\p}{\p
w}\\&= \frac{\p}{\p {z_2}}+(\ov{z_2}-3\ov{z_1}|z_1|^2)\frac{\p }{\p
{w}}+(\ov{z_1}+3\ov{z_1}|z_1|^2)\frac{\p }{\p \ov{w}}.
\end{split} \end{equation*}
Furthermore, we obtain
\begin{equation*} \begin{split}
[L,[L,[L,\ov{L}]]] &=3\ov{z_1}^2\frac{\p }{\p \ov{w}}-3\ov{z_1}^2\frac{\p }{\p {w}}=O(|z|^2).\\
[\ov{L},[L,[L,\ov{L}]]]) &=7|z_1|^2\frac{\p }{\p
\ov{w}}-7|z_1|^2\frac{\p }{\p {w}}=O(|z|^2).
\end{split} \end{equation*}
Notice that
$$
[\ov{L},[\ov{L},[L,\ov{L}]]]=-\ov{[L,[L,[L,\ov{L}]]] }=O(|z|^2),\ \
[\ov{L},[{L},[L,\ov{L}]]]=-\ov{[L,[\ov{L},[L,\ov{L}]]] }=O(|z|^2).
$$
Hence, it is clear that $t_L(L,0)\geq 6$. On the other hand,
$$
[\ov{L},[\ov{L},[L,[L,[L,\ov{L}]]]]]=6\frac{\p }{\p
\ov{w}}-6\frac{\p }{\p {w}}.
$$
It follows that $t_L(M,0)=6$.

\section{Order of vanishing along vector fields}

We first denote in this section  by $(x_1,\cdots,x_n)$ for the
coordinates of  $\mathbb{R}^n$. Let $U\subset \RR^n$ be an open
subset and let $\{X_1,X_2,\cdots,X_r\}$ be a set of linearly
independent real-valued smooth vector fields over $U$. Write
$\mathcal D$ for the (real) submodule of the $C^{\infty}(U)$-module
${\mathcal X}_{\RR}$ consisting of real-valued smooth vector fields
in $U$ generated by $\{X_1,X_2,\cdots,X_r\}$.

\begin{defn} Let $f\in C^\infty(p_0)$ be  a germ of (complex-valued)
smooth function at $p_0\in U$.

\noindent (1).  We say $\mathcal{D}^k(f)(p_0)=0$ if
$Y_k\left(Y_{k-1}(\cdots Y_1(f)\cdots)\right)(p_0)=0$ with each
$Y_j\in \mathcal{D}$ for $j=1,\cdots,k$.

\noindent (2). We say the order  of vanishing of $f$  along
$\mathcal D$, denoted by ${\nu}_{\mathcal{D}}(f)(p_0)$, to be $m\in
\NN\cup \{\infty\}$ if $\mathcal{D}^k(f)(p_0)=0$ for any $k<m$ and
if $\mathcal{D}^m(f)(p_0)\not =0$, namely, $Y_m\left(\cdots(
Y_1(f))\cdots\right)(p_0)\not = 0$ for a certain $Y_1,\cdots, Y_m$
with each $Y_j\in {\mathcal D}$ when $m<\infty$.
\end{defn}

Our main goal in this section is to prove the following
\begin{thm}\label{cor1.3} Let $f,g$ be germs of smooth functions at
$p_0$. Then

\noindent   (1). $${\nu}_{\mathcal D}(fg)(p_0)={\nu}_{\mathcal
D}(f)(p_0)+
  {\nu}_{\mathcal D}(g)(p_0).$$


\noindent (2). If $0\leq f\leq g$ and $g(p_0)=f(p_0)$, then $
{\nu}_{\mathcal D}(f)(p_0)\geq {\nu}_{\mathcal D}(g)(p_0)$.

\noindent (3). If $f\ge 0$, then $ {\nu}_{\mathcal D}(f)(p_0)$ is an
even number or infinity.
\end{thm}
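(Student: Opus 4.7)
My plan is to reduce all three assertions, via the Helffer--Nourrigat/Baouendi--Rothschild normalization theorem \cite{HN}\cite{BR}, to analogous statements for a non-isotropic (weighted) order of vanishing in coordinates adapted to $\{X_1,\ldots,X_r\}$ at $p_0$. As a preliminary step I would observe that $\nu_{\mathcal D}(f)(p_0)$ is determined entirely by the iterated $X_i$-derivatives of $f$ at $p_0$: writing each $Y_j\in\mathcal D$ as $\sum_i a_{ij}X_i$ and expanding $Y_k\cdots Y_1 f$ by the Leibniz rule produces a $C^\infty(U)$-linear combination of terms of the form $X_{i_\ell}\cdots X_{i_1}f$ with $\ell\le k$; evaluation at $p_0$ then identifies $\nu_{\mathcal D}(f)(p_0)$ with the smallest $m$ for which some length-$m$ iterated $X$-derivative of $f$ is nonzero at $p_0$.

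Next I would invoke the HN/BR normal form to produce local coordinates $(y_1,\ldots,y_n)$ centered at $p_0$, positive integer weights $w_1,\ldots,w_n$, and the non-isotropic dilation $\delta_t(y_1,\ldots,y_n)=(t^{w_1}y_1,\ldots,t^{w_n}y_n)$ with respect to which each $X_j=\widehat X_j+R_j$, where $\widehat X_j$ is a nonzero $\delta_t$-homogeneous vector field of weight $-1$, $R_j$ has strictly higher weight, and the principal system $\{\widehat X_1,\ldots,\widehat X_r\}$ together with its iterated brackets spans $T_0\RR^n$. For a smooth germ $h$, define the weighted order $w(h)(p_0):=\min\{\sum_i w_i\alpha_i : c_\alpha\ne 0\}$ from its Taylor expansion $h=\sum_\alpha c_\alpha y^\alpha$. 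The central lemma, which I would prove next, is the identity
\[
\nu_{\mathcal D}(h)(p_0)=w(h)(p_0).
\]
The inequality $\nu_{\mathcal D}(h)\ge w(h)$ is easy, since each $X_j$ lowers weighted order by at most $1$; the reverse inequality uses bracket generation of the principal system to show that every weight-$m$ Taylor monomial of $h$ is detected, up to a nonzero scalar, by some length-$m$ iterated $X$-derivative.

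Once this identification is in hand, the three parts become standard facts about weighted orders. For (1), $w(fg)=w(f)+w(g)$ because the product of the two nonzero leading weighted-homogeneous polynomials of $f$ and $g$ is itself a nonzero weighted-homogeneous polynomial of the sum weight. For (2), $0\le f\le g$ with $w(g)=m$ gives $|f(\delta_t y)|\le g(\delta_t y)=O(t^m)$ uniformly on bounded sets as $t\to 0^+$, which forces the weighted Taylor coefficients of $f$ of weight $<m$ to vanish. For (3), if $f\ge 0$ with $w(f)=m<\infty$ then taking $t\to 0^+$ in $t^{-m}f(\delta_t y)$ shows the leading weighted-homogeneous piece $f_m$ is nonneg on all of $\RR^n$; the involution $\tau$ that negates $y_i$ exactly for those $i$ with $w_i$ odd acts on a weight-$m$ monomial $y^\alpha$ by multiplication by $(-1)^{\sum_{w_i\text{ odd}}\alpha_i}=(-1)^m$, since $w_i\alpha_i\equiv\alpha_i\pmod 2$ when $w_i$ is odd and is even otherwise. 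Hence $\tau^*f_m=(-1)^m f_m$, and if $m$ were odd this would give $f_m\le 0$ together with $f_m\ge 0$, forcing $f_m\equiv 0$ and contradicting $w(f)=m$; thus $m$ is even.

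I expect the principal obstacle to be the key lemma $\nu_{\mathcal D}(h)=w(h)$, and specifically its direction $\nu_{\mathcal D}(h)\le w(h)$: one must show that for each length-$m$ iterated composition $X_{i_m}\cdots X_{i_1}$ the contributions involving at least one remainder $R_j$ vanish at $p_0$ by weight, so that $X_{i_m}\cdots X_{i_1} h(p_0)=\widehat X_{i_m}\cdots\widehat X_{i_1} f_m(0)$, and then use the bracket-generating property of $\{\widehat X_j\}$ to exhibit a choice of indices for which this is nonzero whenever $f_m\ne 0$. This is precisely where the full strength of the HN/BR normal form is needed.
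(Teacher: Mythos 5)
Your proposal follows essentially the same approach as the paper: use the Helffer--Nourrigat/Baouendi--Rothschild normalization to reduce $\nu_{\mathcal D}$ to a weighted vanishing order (your ``central lemma'' $\nu_{\mathcal D}(h)=w(h)$ is precisely Theorem~\ref{thm1.2}), then read off (1)--(3) from elementary facts about the leading weighted-homogeneous polynomial. Your parity argument for (3) via the coordinate involution is the same as the paper's argument (your involution is exactly the dilation $\delta_{-1}$ and the factor $(-1)^m$ is the same computation), and your dominated-decay argument for (2) is equivalent to the paper's direct contradiction $-f^{(m)}\ge 0$ and $f^{(m)}\ge 0$.

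One point needs correcting. You state the normal form with \emph{all} weights finite and with the principal parts $\{\widehat X_1,\ldots,\widehat X_r\}$ and their iterated brackets spanning $T_0\RR^n$. That is the finite H\"ormander type case, and Theorem~\ref{cor1.3} carries no finite-type hypothesis. In the infinite-type case the normalization in \cite{HN}, \cite{BR}, \cite{BER} produces an additional block of coordinates $s'$ transversal to the bracket-generated directions, and the principal parts do \emph{not} bracket-generate. The identity $\nu_{\mathcal D}(h)=w(h)$ only holds once the weighted order $w$ is computed with the convention $\mathrm{wt}(s')=+\infty$; with any finite weight on $s'$ it fails (e.g.\ $h=s'_1$ has finite $w(h)$ but $\nu_{\mathcal D}(h)=\infty$). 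With that convention in place your lemma and all three parts go through exactly as you describe, the only change in (3) being that $f^{(m)}$ is then automatically independent of $s'$ so the dilation/involution acts only on the $(x,s)$ block.
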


 After a linear change of
coordinates, assume that $p_0=0$ and
\begin{equation} \begin{split}
X_j=\frac{\p}{\p x_j}+\widehat{X_j}\ \ \text{with} \
\widehat{X_j}|_{p_0}=0.
\end{split} \end{equation}

Write
$$E_{0}=\h{span}\{X_j|_{p_0},\ j=1,\cdots,r\}=\h{span}\{\frac{\p}{\p x_1}|_{p_0},\cdots,
\frac{\p}{\p x_r}|_{p_0}\}.$$

Define $E_1\subset T_{p_0}\Omega$ to be the linear span of $E_0$ and
the values at $p_0=0$ of  commutators of vector fields from
$\mathcal{D}$ of length $m_1$, where  $m_1$ is the smallest integer,
if existing, such that $E_1\supsetneq E_0$. $m_1$ is called the
first H\"omander number of $\mathcal{D}$. $\ell_1=\text{dim} E_1-r$
is called the multiplicity of $m_1$.

When $m_1<\infty$, we define $E_2\subset T_{p_0}\Omega$ to be the
linear span of $E_1$ and the values at $p_0=0$ of  commutators of
vector fields from $\mathcal{D}$ of length $m_2$, where  $m_2$ is
the smallest integer, if existing, such that $E_2\supsetneq E_1$.
$m_2$ is called the second H\"omander number of $\mathcal{D}$.
$\ell_2=\text{dim} E_1-\ell_1$ is called the multiplicity of $m_2$.

Inductively one can define a finite sequence of natural numbers
$m_1<\cdots<m_h$ with $h\ge 1$, and the associated linear subspaces
of $T_{p_0}U$: $E_1\varsubsetneq E_2\varsubsetneq \cdots
\varsubsetneq E_h\subset T_{p_0}U$. Here $m_h$ is the largest
H\"ormander number and
$\ell_{j+1}=\text{dim}E_{j+1}-\text{dim}E_{j}$ is the multiplicity
of $m_{j+1}$ for $j=0,\cdots,h-1$ . When $E_h=T_{p_0}U$, we call
$\mathcal{D}$ is of finite H\"ormander type at $p_0$. Otherwise, we
say $\mathcal{D}$ is of infinite type at $p_0$. A deep theorem on
the normalization  of $\mathcal{D}$ by Helffer-Nourrigat and
Boauendi-Rothschild give the following statements: (For a detailed
proof, see, for example, \cite[Theorem 3.5.2]{BER}):
\\

(1). There exists a coordinate system $(x,s,s')$ of $\RR^n$ centered
at $p_0$ which corresponds $p_0$ to $0$, where $x=(x_1,\cdots,x_r)$,
$s=(s_1,\cdots,s_h)$, $s_j=(s_{j1},\cdots,s_{j\ell_j})\in
\RR^{\ell_j}$ ($j=1,\cdots,h$), $s'=(s'_1,\cdots,s'_{m'})\in
\RR^{m'}$, such that in the new coordinates $(x,s,s')$, the
following holds:
\begin{equation}\label{001}\begin{split}
X_k=\frac{\p}{\p
x_k}+\sum\limits_{j=1}^{h}\sum\limits_{q=1}^{\ell_j}P_{k,j,q}(x,s_1,\cdots,s_{j-1})
\frac{\p}{\p
s_{jq}}+O_{\h{wt}}(0),\ \ k=1,\cdots,r.
\end{split} \end{equation}
Here after assigning the weight of $x:=(x_1,\cdots,x_r)$ to be one
and that of $s_j$ to be $m_j$ for $j=1,\cdots,h$, namely, after
setting $\h{wt}(x)=1$ and $\h{wt}(s_j)=m_j$ for $j=1,\cdots,h$, each
$P_{k,j,q}(x,s_1,\cdots,s_{j-1})$ is a weighted homogeneous
polynomial of weighted degree $m_j-1$ for  $j=1,\cdots,h$. We define
the weight of $s'$ to be infinity. For each $x_k,s_{jq}$, set
\begin{equation}\begin{split}
\text{wt}(\frac{\p}{\p x_k})=-1,\ \ \text{wt}(\frac{\p}{\p
s_{jq}})=-m_j.
\end{split} \end{equation}
Write
\begin{equation}\begin{split}
X^0_k=\frac{\p}{\p
x_k}+\sum\limits_{j=1}^{h}\sum\limits_{q=1}^{\ell_j}P_{k,j,q}(x,s_1,\cdots,s_{j-1})\frac{\p}{\p
s_{jq}}.
\end{split} \end{equation}
Then $X_k^0$ is a homogeneous vector field of weighted degree $-1$
for each $k$.  The meaning $O_{\h{wt}}(0)$  in (\ref{001}) is that
each term in the weighted  formal Taylor expansion of $X_k-X_k^0$ at
$0$ has weight at least $0$ (after  assigning the weight of $s'$ to
be infinity).

\medskip

(2). Let $\{Y_{1,1},\cdots,Y_{1,\ell_1}\}$ be the commutators of
vector fields from $\mathcal{D}$ of length $m_1$ such that
\begin{equation}\begin{split}
E_1=E_0\oplus \h{span}\{Y_{1,1}|_0,\cdots,Y_{1,\ell_1}|_0\}.
\end{split} \end{equation}
Notice by the statement in (1) that the lowest degree in the formal
expansion of each $Y_{1,j}$  at $0$ is $-m_1$. Hence
\begin{equation}\begin{split}
\h{span}\{Y_{1,1},\cdots,Y_{1,l_1}\}|_0=\h{span}\{\frac{\p}{\p
s_{1,1}}|_0,\cdots,\frac{\p}{\p s_{1,\ell_1}}|_0\}.
\end{split} \end{equation}
After a linear change of coordinates in
$(s_{1,1},\cdots,s_{1,l_1})$, we may assume that
\begin{equation}\begin{split}
Y_{1,j}|_0=\frac{\p}{\p s_{1,j}}|_0\ \text{for}\ j=1,\cdots,\ell_1.
\end{split} \end{equation}
Inductively, we then define for $k\le h$  the commutators of vector
fields from $\mathcal{D}$ of length $m_k$
\begin{equation}\begin{split}
\{Y_{k,j}\}_{j=1}^{\ell_k}\ \text{such that}\
Y_{k,j}|_0=\frac{\p}{\p s_{k,j}}|_0\ \text{for}\ j=1,\cdots,\ell_k\
\text{with \ \ wt}(Y_{k,j})=O_{\h{wt}}(-m_k).
\end{split} \end{equation}
Then
\begin{equation}\begin{split}
Y_{k,j}=\frac{\p}{\p
s_{k,j}}+\sum\limits_{\tau>k}\sum_{q=1}^{\ell_\tau}Q_{k,j,\tau,q}(x,s_1,\cdots,s_{k-1})\frac{\p}{\p
s_{\tau q}}+O_{wt}(-m_k+1),\ \ j=1,\cdots,\ell_k.\\
 Y^0_{k,j}=\frac{\p}{\p
s_{k,j}}+\sum\limits_{\tau>k}\sum_{q=1}^{\ell_\tau}Q_{k,j,\tau,q}(x,s_1,\cdots,s_{k-1})
\frac{\p}{\p
s_{\tau q}}
\end{split} \end{equation}
Here $Q_{k,j,\tau,q}(x,s_1,\cdots,s_{k-1})$ are weighted homogeneous
polynomials of degree $m_\tau-m_k$.  The notation  $O_{wt}(-m_k+1)$
has a  similar meaning. Write $Y_{0,j}=X_j$.

Write $Y_k=(Y_{k1},\cdots,Y_{k\ell_k})$ and
$Y_k^{\b_k}=Y_{k1}^{\a_{k,1}}\cdots Y_{k,\ell_k}^{\a_{k,\ell_k}}$
with $\b_k=(\a_{k,1},\cdots,\a_{k,\ell_k})$ for $k=0,\cdots,h$.

\medskip With   the just discussed normalization of $\mathcal D$ at our disposal,  we now
let  $f\in C^{\infty}(U)$ and let $f\sim\sum\limits_{j\geq
N}^{\infty}f^{(j)}$ be the weighted formal  expansion
 with $wt(s')=\infty$. Write, in what follows,
$\prod_{j=0}^{m_h}Y^{\b_{j}}_j=Y^{\b_{0}}_0\cdots Y^{\b_{m_h}}_{m_h}
$.
Then
\begin{equation}\label{002}\begin{split}
\left(\prod_{k=0}^{m_h}
Y_k^{\beta_k}\right)(f)=\left(\prod_{k=0}^{m_h}
(Y_k^{0})^{\beta_k}\right)(f^{(N)})+O_{wt}(N^*+1).
\end{split} \end{equation}
Here $\left(\prod_{k=1}^{\ell} (Y_k^{0})^{\beta_k}\right)(f^{(N)})$
is a weighted homogeneous polynomial of degree
$N^*:=N-\sum_{k=0}^{m_h} \beta_k m_k$, where $m_0=r$.


Let $\Gamma=x^{\b_0} s_1^{\beta_1}\cdots s_h^{\beta_h}$. Then
$\Gamma$ is a polynomial of weighted degree $m:=\sum_{k=0}^{h}|\b_k|
m_k.$ By (\ref{002}), $\nu_{\mathcal D}(\G)(0)\ge m$. On the other
hand $Y_0^{\b_0}Y^{\b_1}_1\cdots Y^{\b_h}_h(\Gamma)(0)=\b_0 !\b_1
!\cdots \b_h!\neq 0.$ Since $Y_0^{\b_0}Y^{\b_1}_1\cdots
Y^{\b_h}_h(\Gamma)(0)$ is a finite linear combination of terms of
the form:
$$Z_m\cdots Z_1(\G)(0),\ \ Z_j\in \{X_1,\cdots, X_r\}\ \h{ for } j=1,\cdots,r $$
we conclude that ${\mathcal D}^m(\G)(0)\not =0$. Thus we conclude
that
\begin{equation}\label{003}\begin{split}
 {\nu}_{\mathcal D}(\G)(0)=m=\sum_{k=0}^{h}\b_k m_k.
\end{split} \end{equation}

\begin{thm}\label{thm1.2} Assume the notations and definitions set
up above. Let $f$ be a germ of complex-valued smooth function at
$p_0=0$. Assume that $f= f^{(m)}+O_{wt}(m+1)$, where
  $f^{(m)}$ is the  lowest  weighted non-zero homogeneous term of  degree $m\in \NN$.
  Then $m= {\nu}_{\mathcal D}(f)(0)$.
   If $f= O_{wt}(m)$ for any $m$, then ${\nu}_{\mathcal D}(f)(0)=\infty.$
\end{thm}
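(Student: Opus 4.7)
For the lower bound $\nu_{\mathcal D}(f)(0) \geq m$ (which also settles the infinite case), I will use the normalization $X_k = X_k^0 + O_{\text{wt}}(0)$ with $X_k^0$ weighted homogeneous of weight $-1$. Any element of $\mathcal{D}$ is a $C^\infty$-combination of $X_1, \ldots, X_r$, and since smooth coefficients lie in $O_{\text{wt}}(0)$, applying one such field lowers the weighted order of vanishing by at most one. Iterating $k$ applications on $f = f^{(m)} + O_{\text{wt}}(m+1)$ yields a function of weighted order $\geq m - k$; when $k < m$ this is $\geq 1$, forcing vanishing at $0$. In the scenario $f = O_{\text{wt}}(m)$ for every $m$, the same bound gives $\nu_{\mathcal D}(f)(0) = \infty$.

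For the upper bound, expand $f^{(m)} = \sum_\alpha c_\alpha \Gamma_\alpha$ with $\Gamma_\alpha = x^{\alpha_0} s_1^{\alpha_1} \cdots s_h^{\alpha_h}$, and pick $\gamma \in \{\alpha : c_\alpha \neq 0\}$ that is lexicographically maximal on the tuple $(|\alpha_h|, |\alpha_{h-1}|, \ldots, |\alpha_0|)$. The candidate operator is $Y^\gamma := Y_0^{\gamma_0} Y_1^{\gamma_1} \cdots Y_h^{\gamma_h}$, a linear combination of $m$-fold iterated compositions of vector fields from $\{X_1,\ldots, X_r\}$, where $m = \sum_k |\gamma_k| m_k$. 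By (\ref{002}) applied with $N = m$, so that $N^* = 0$ and the remainder $O_{\text{wt}}(1)$ vanishes at the origin,
\begin{equation*}
Y^\gamma(f)(0) \;=\; \Bigl(\prod_{k=0}^{h}(Y_k^0)^{\gamma_k}\Bigr)(f^{(m)})(0).
\end{equation*}
The aim is to show this quantity equals $c_\gamma \gamma!$ with $\gamma! := \prod_{k,j}\gamma_{k,j}!$, which is nonzero; an expanded $m$-fold composition $X_{i_m}\cdots X_{i_1}(f)(0)$ is then nonzero, yielding $\nu_{\mathcal D}(f)(0) \leq m$.

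To establish this equality, decompose each $Y_{k,j}^0 = \partial/\partial s_{k,j} + R_{k,j}$, where $R_{k,j}$ collects the terms $Q_{k,j,\tau,q}\,\partial/\partial s_{\tau,q}$ with $\tau > k$, and track the coordinate sums $|\alpha_k|$ as $Y^\gamma$ acts monomial by monomial. The $\partial/\partial s_{k,j}$ summand reduces $|\alpha_k|$ by one without touching higher-level sums, while an $R$-term reduces $|\alpha_\tau|$ for some $\tau > k$ and, through multiplication by $Q \in \mathbb{C}[x, s_1, \ldots, s_{k-1}]$, can only raise $|\alpha_{k''}|$ for $k'' < k$. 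In particular $|\alpha_h|$ is only decreased; for $Y^\gamma(\Gamma_\alpha)(0) \neq 0$ one needs $|\alpha_h| \geq |\gamma_h|$, and equality forces zero use of $R$-terms with $\tau = h$, so the same analysis descends to level $h-1$, and so on. The lexicographic maximality of $\gamma$ then rules out every $\alpha \neq \gamma$ with $c_\alpha \neq 0$: either at the first level $k^*$ where the tuples differ the constraint $|\alpha'_{k^*}| \geq 0$ is violated, or, in the tie case where all level sums match, no $R$-term ever survives and $Y^\gamma$ acts as the pure partial $\prod_{k,j}(\partial/\partial s_{k,j})^{\gamma_{k,j}}$, whose value on $\Gamma_\alpha$ at $0$ is $\gamma!\cdot\delta_{\alpha,\gamma}$. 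I expect the main obstacle to lie in this combinatorial descent --- especially in verifying that the ban on $R$-terms propagates cleanly through all levels despite the cross-level interactions built into the normalization (\ref{001}).
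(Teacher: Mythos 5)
Your proof is correct and follows essentially the same route as the paper: reduce via the Helffer--Nourrigat normalization and formula (\ref{002}) to the weighted-homogeneous leading part $f^{(m)}$, select a lexicographically extremal nonzero monomial $\gamma$, and verify $Y^\gamma f(0)=c_\gamma\gamma!\neq 0$ by exploiting the strictly triangular ($\tau>k$) structure of the $R$-terms. The only cosmetic difference is that you order by the norm tuple $(|\alpha_h|,\ldots,|\alpha_0|)$ and handle ties via the Kronecker-delta observation, whereas the paper orders lexicographically on the full component tuple $(\beta_h,\ldots,\beta_0)$; the descent you flagged as a possible obstacle does propagate cleanly, since once $\alpha_j=\gamma_j$ componentwise for $j>k$ the variables $s_{k+1},\ldots,s_h$ are already exhausted and every $R$-term at level $k$ annihilates the remaining monomial.
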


\begin{proof}
  Assume that
$$
f^{(m)}=\sum\limits_{\sum\limits_{j=0}^{h}|\beta_j|m_j=m}a_{\beta_0\cdots
\beta_h}x^{\beta_0}s_1^{\beta_1}\cdots s_h^{\beta_h},\ \
a_{\beta_0\cdots \beta_h}\not \equiv 0.
$$
We first find the largest $\b_h$ in the lexicographic order among
non-zero monomials. Then among those non-zero  monomials with $\b_h$
being maximal, we find the largest $\b_{h-1}$  in the lexicographic
order. By an induction, we get the $(\beta_h,\cdots,\beta_0)$ which
is lexicographically maximal among those non-zero monomials in the
above expansion of $f^{(m)}$.
Then by (\ref{003}),
$$
Y_0^{\b_0}Y^{\b_1}_1\cdots Y^{\b_h}_h f(p_0)=a_{\beta_0\cdots
\beta_h}\b_h!\cdots \b_0!\neq 0.
$$
As in the argument to prove (\ref{003}), we conclude that
${\nu}_{\mathcal D}(f)(0)=m$.
\end{proof}

\begin{proof}[Proof of Theorem \ref{cor1.3}] We can assume $p_0=0$ and ${\mathcal D}$
 has been normalized as above.
(1) is a direct consequence of Theorem \ref{thm1.2}. As for (2), we have
 $g-f\geq 0$ and $(g-f)(p_0)=0$. Suppose that  $
{\nu}_{\mathcal D}(f)(p_0)<  {\nu}_{\mathcal D}(g)(p_0)$. Then
$g-f=-f^{(m)}+O_{wt}(m+1)$, where $f^{(m)}$ is the lowest non-zero
weighted homogeneous polynomial in the expansion of $f$. Apparently,
$f^{(m)}\geq 0$ as $f\ge 0$ and ${\nu}_{\mathcal D}(f)(p_0)<\infty$.
Also, $-f^{(m)}\geq 0$ as $f-g\ge 0$ and $f^{(m)}\not\equiv 0$. This
is a contradiction.

To prove (3), assume that ${\nu}_{\mathcal D}(f)(p_0)=m$ with
$0<m<\infty$. Then $f^{(m)}\not \equiv 0$ but $f^{(m)}\ge 0$. Since
$f^{(m)}(\tau x, \tau^{m_1}s_1,\cdots,
\tau^{m_h}s_h)=\tau^{m}f^{(m)}(x, s_1,\cdots, s_h)\ge 0$ for any
$\tau\in \RR$. It follows that $m$ is an even number.
\end{proof}

The following result reducing the vanishing order along a system of
vector fields to  that along a single one could be very useful in
applications, which, for instance, also gives an immediate proof of
Theorem \ref{cor1.3}.
\begin{thm}\label{imp}
  Let $L(x_1,\cdots,x_r)=\sum\limits_{j=1}^{r}a_jx_j$ be a non-zero
  linear function in $x=s_0$. Let
  $$
  Z=\sum\limits_{j=0}^{h}\sum\limits_{q=1}^{\ell_j}
\frac{1}{m_j}t_{j,q}L(x)^{m_j-1}Y_{j,q},\ \ t_{j,q}\in \mathbb{R}.
  $$
  Let $f\in C^\infty(p_0)$. For a generic choice of
  $\{t_{j,q}\}$,
  $$
{\nu}_{\mathcal{D}}(f)(p_0)= {\nu}_{Z}(f)(p_0).$$ Here
${\nu}_{Z}(f)(p_0)$ is the smallest non-negative integer $\ell$ such
that $Z^\ell(f)(p_0)\not = 0$ or $\infty$ when such an $\ell$ does
not exist.
\end{thm}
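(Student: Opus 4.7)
The plan is to work in the normalized coordinates preceding Theorem \ref{thm1.2} and decompose
$$
Z=Z^0+\widetilde Z,\qquad Z^0:=\sum_{j=0}^{h}\sum_{q=1}^{\ell_j}\frac{t_{j,q}}{m_j}\,L(x)^{m_j-1}\,Y_{j,q}^{0}.
$$
Since $L(x)^{m_j-1}$ has weight $m_j-1$ and $Y_{j,q}^{0}$ has weight $-m_j$, every summand of $Z^0$ has weight exactly $-1$, while $\widetilde Z=O_{\h{wt}}(0)$. Set $m=\nu_{\mathcal D}(f)(p_0)$; by Theorem \ref{thm1.2}, if $m<\infty$ then $f=f^{(m)}+O_{\h{wt}}(m+1)$ with $f^{(m)}\not\equiv 0$.

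First I would establish $\nu_Z(f)(p_0)\ge m$ by weight bookkeeping. Expanding $Z=\sum_{a\ge-1}Z^{(a)}$ and $f\sim\sum_{l\ge m}f^{(l)}$, each summand of $Z^k(f)$ has the form $Z^{(a_1)}\cdots Z^{(a_k)}(f^{(l)})$, a weighted homogeneous polynomial of degree $l+\sum_i a_i\ge m-k$. For $k<m$ this degree is strictly positive and the summand vanishes at $0$; for $k=m$ only the configuration $l=m$, $a_1=\cdots=a_m=-1$ contributes, yielding
$$
Z^m(f)(0)=(Z^0)^m(f^{(m)})(0).
$$

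The remaining task, and the main obstacle, is to show $(Z^0)^m(f^{(m)})(0)\not\equiv 0$ as a polynomial in $t=(t_{j,q})$. I would interpret it via the flow of the polynomial vector field $Z^0$ starting at $0$: weight $-1$ homogeneity gives $(\delta_\tau)_*Z^0=\tau Z^0$ (where $\delta_\tau$ denotes the anisotropic dilation), hence $\phi_\tau^{Z^0}(0)=\delta_\tau\bigl(\phi_1^{Z^0}(0)\bigr)$. Comparing the $\tau^m$-coefficient of $f^{(m)}\bigl(\phi_\tau^{Z^0}(0)\bigr)$ as computed by weighted homogeneity and by the Lie--Taylor series yields
$$
(Z^0)^m(f^{(m)})(0)=m!\,f^{(m)}\!\bigl(\Phi(t)\bigr),\qquad \Phi(t):=\phi_1^{Z^0}(0).
$$
It then suffices to prove that the polynomial map $\Phi:\RR^n\to\RR^n$ has image not contained in the zero set of $f^{(m)}$.

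To see this, after a linear change in $x$ (absorbable into a relabeling of the $t_{0,q}$) I assume $L(x)=x_1$. Then $\dot x_k=t_{0,k}$ on the flow forces $x_k(1)=t_{0,k}$. The right-hand side of $\dot s_{j,q}$ depends only on $x$ and $s_1,\cdots,s_{j-1}$ through the structural polynomials $P_{k,j,q}$ and $Q_{j',q',j,q}$ of (\ref{001}), while the summand $\frac{t_{j,q}}{m_j}x_1^{m_j-1}Y_{j,q}^{0}$ contributes a single $t_{j,q}$-linear term; integrating inductively in $j$ gives
$$
s_{j,q}(1)=A_{j,q}\bigl(t_{0,\cdot},\cdots,t_{j-1,\cdot}\bigr)+\frac{t_{0,1}^{\,m_j-1}}{m_j^{\,2}}\,t_{j,q}.
$$
Hence the Jacobian of $\Phi$ is block lower triangular with diagonal blocks $I_r$ and $\bigl(t_{0,1}^{\,m_j-1}/m_j^{\,2}\bigr)I_{\ell_j}$, and its determinant is not identically zero in $t$. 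So $\Phi$ is a local diffeomorphism on $\{t_{0,1}\neq 0\}$, its image contains a nonempty Euclidean open set, and $f^{(m)}\!\circ\!\Phi$ is a nonzero polynomial in $t$. Therefore $Z^m(f)(0)\neq 0$ off a proper real algebraic subvariety of $t$-space, giving $\nu_Z(f)(p_0)=m$ for generic $t$. When $m=\infty$, the weight-counting step alone shows $Z^k(f)(0)=0$ for every $k$, so $\nu_Z(f)(p_0)=\infty$.
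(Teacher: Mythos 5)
Your proposal is correct, and it follows the same basic strategy as the paper: pass to the normalized coordinates, use weight bookkeeping to get $\nu_Z(f)\ge m$ and to reduce $Z^m(f)(0)$ to the action of the homogeneous part on $f^{(m)}$, and then use the integral flow to turn the question into the non-vanishing of a polynomial in the parameters $t$. The execution differs in two ways that are worth noting, and both are to your advantage. First, the paper computes the Taylor expansion in $\tau$ of the coordinates $\gamma(\tau)$ of the flow of $Z$ itself, while you flow the weight-$(-1)$ homogeneous part $Z^0$ to time $1$ and use the scaling identity $\phi_\tau^{Z^0}(0)=\delta_\tau\bigl(\phi_1^{Z^0}(0)\bigr)$ to read off $Z^m(f)(0)=m!\,f^{(m)}(\Phi(t))$ in closed form; the two lead to the same polynomial in $t$, but your route is cleaner and avoids manipulating asymptotic expansions. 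Second, and more substantively, the paper's expansion $s_j(\tau)=t_jL^{m_j-1}(t_0)\tau^{m_j}+O(\tau^{m_j+1})$ silently drops the order-$\tau^{m_j}$ contributions coming from the lower-index $Y_{j',q'}$ through the structural polynomials $P$ and $Q$; these are precisely your cross-terms $A_{j,q}(t_{0,\cdot},\ldots,t_{j-1,\cdot})$. The paper's final coefficient comparison still works because of the triangular dependence on the $t_j$'s, but it is not stated; your block-lower-triangular Jacobian argument makes that triangularity explicit and gives a tidy proof that $f^{(m)}\circ\Phi\not\equiv 0$. So your write-up is a correct and in fact more careful variant of the paper's argument rather than a genuinely different proof.

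Two small things to double-check in your own text: you should note that $m_0=1$ (so that the coefficient of $Y_{0,q}$ in $Z$ is just $t_{0,q}$, which your $\dot x_k=t_{0,k}$ step uses); and when you invoke the Lie--Taylor series $f^{(m)}\bigl(\phi_\tau^{Z^0}(0)\bigr)=\sum_k\frac{\tau^k}{k!}(Z^0)^k(f^{(m)})(0)$, observe that it terminates at $k=m$ because $(Z^0)^k(f^{(m)})$ has weighted degree $m-k$, so the identity is exact and valid for all $\tau$ where the (globally defined, triangular) flow exists.
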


\begin{proof}
  Apparently, $$ {\nu}_{{Z}}(f)(p_0)\geq
{\nu}_{\mathcal{D}}(f)(p_0):=m.$$ We need only to  assume that
$m<\infty$ and prove  that $Z^mf^{(m)}(p_0)\neq 0$ for a generic
choice of
$$(t_0,t_1,\cdots,t_h):=(t_{0,1},\cdots,t_{h,\ell_h})\in \RR^{n_0},$$
namely, for any choice of $(t_{0,1},\cdots,t_{h,\ell_h})$  but away
from a certain proper real analytic variety in $\RR^{n_0}$. (Here
$n_0=r+\sum_{j=1}^h\ell_j.$)

Let $\gamma(\tau)$ be the integral curve of $Z$ through $p_0=0$.
Namely, $\frac{d \gamma(\tau)}{d\tau}=Z\circ \gamma(t)$ and
$\gamma(0)=p_0=0$. Write
$\gamma(\tau)=(x(\tau),s_1(\tau),\cdots,s_h(\tau),\wt{s(\tau)})$.
Then $\frac{d x(\tau)}{d\tau}=t_0+O(\tau)$. Hence
$x(\tau)=t_0\tau+O(\tau^2)$. Similarly,
$$
\frac{d
s_j(\tau)}{d\tau}=\frac{1}{m_j}t_jL^{(m_j-1)}(t_0)\tau^{m_j-1}+O(\tau^{m_j}).
$$
Hence
$$
s_j(\tau)=t_jL^{m_j-1}(t_0)\tau^{m_j}+O(\tau^{m_j+1}),\ \
\wt{s}(\tau)=O(\tau^{m+1}).
$$
Now,
$$
Z^m(f)(0)=\frac{d}{d t^m}(f^{(m)}\circ \gamma(t))|_{t=0}.
$$
Write
$$
f^{(m)}(s_0,s_1,\cdots,s_h)=\sum a_{\beta_0\beta_1\cdots
\beta_h}s_0^{\beta_0}\cdots s_h^{\beta_h}.
$$
Then
$$
Z^m(f)(p_0=0)=\sum a_{\beta_0\beta_1\cdots
\beta_h}\left(L(t_0)\right)^{\sum\limits_{j=0}^{h}(m_j-1)|\beta_j|}m!t_0^{\beta_0}\cdots
t_h^{\beta_h}.
$$
Suppose $Z^m(f)(p_0)\equiv 0$ for any choice of
$(t_0,t_1,\cdots,t_h)$. Then
$$
\sum a_{\beta_0\beta_1\cdots
\beta_h}L^{m-\sum_{j=0}^h\b_j}(t_0)m!t_0^{\beta_0}\cdots
t_j^{\beta_j}\equiv 0,\ (t_0,t_1,\cdots,t_h)\in \RR^{n_0}.
$$
Hence $a_{\beta_0\beta_1\cdots \beta_h}\equiv 0.$ This contradicts
that $f^{(m)}\equiv 0$. Hence for $(t_{0,1},\cdots,t_{h,\ell_h})$
not from the proper real analytic subset defined by $$\sum
a_{\beta_0\beta_1\cdots
\beta_h}L^{m-\sum_{j=0}^h\b_j}(t_0)m!t_0^{\beta_0}\cdots
t_j^{\beta_j}= 0,$$  then
$$
{\nu}_{\mathcal{D}}(f)(p_0)= {\nu}_{Z}(f)(p_0)=m.$$
\end{proof}


\subsection{Application in the study of regular  types of a real hypersurfaces}

We  now adapt the notations and definitions which we have set up in
Section \ref{subb1}. We let $M\subset \CC^n$ be a smooth real
hypersurface and let $B$ be a smooth subbundle  of complex dimension
$s$ of $T^{(1,0)}M$. Write  ${\mathcal D}_B$ for the real submodule
over real-valued smooth functions over $M$ that is generated by
$\h{Re}(L)$ and $\h{Im}(L)$ for any $L\in  \Gamma_\infty(B)$, where
$\Gamma_\infty(B)$ denotes the set of smooth sections of $B$. Then
 ${\mathcal D}_B$ is locally  generated by $2s$ $\RR$-linearly independent real vector
fields.  For any   smooth function $f$, we define
$$\nu_B(f)(p)=\nu_{{\mathcal D}_B}(f)(p).$$
Notice that $\nu_B(f)(p)$ is the least $m$ such that $Y_m\cdots
Y_1(f)(p)\not =0$ where $Y_j\in {\mathcal M}_1(B)$. If such an $m$
does not exist, $\nu_B(f)(p)=\infty.$

 We then apparently have
\begin{equation}\label{006}\min_{L\in \Gamma_\infty(B),\ L|_p\not
=0}\nu_{B}(\ld(L,L))(p)=c^{(s)}(B,p).
\end{equation}
 When $B=\h{span}\{L\}$, as before, we
write $\nu_L(f)(p)=\nu_B(f)(p).$ Now, we can reformulate Theorem
\ref{cor1.3} as follows:
\begin{thm}\label{cor2.3} Let $B$ and $M$ be as just defined.
Let $f,g$ be germs of (complex-valued) smooth functions over $M$ at
$p\in M$. Then

\noindent   (1). $${\nu}_{B}(fg)(p)={\nu}_{B}(f)(p)+
  {\nu}_{B}(g)(p).$$


 \noindent (2). If $f,g$ are real-valued and $0\leq f\leq g$ with
$g(p)=f(p)$, then $ {\nu}_{B}(f)(p)\geq {\nu}_{B}(g)(p)$.

\noindent (3). If $f\ge 0$, then $ {\nu}_{B}(f)(p)$ is an even
number if not infinity.
\end{thm}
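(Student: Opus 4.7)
The plan is to reduce Theorem \ref{cor2.3} directly to Theorem \ref{cor1.3} by working in local real coordinates on $M$. First I would shrink to a coordinate chart of $M$ about $p$, identifying a neighborhood of $p$ in $M$ with an open subset of $\RR^{2n-1}$, and fix a smooth frame $L_1,\dots,L_s$ for $B$ on this chart. Setting $X_{2j-1}=\h{Re}(L_j)$ and $X_{2j}=\h{Im}(L_j)$ for $j=1,\dots,s$ produces $2s$ real-valued smooth vector fields; since $L_1,\dots,L_s$ are $\CC$-linearly independent at every point, the $X_j$'s are $\RR$-linearly independent at every point, and the real $C^\infty$-submodule they generate is exactly $\mathcal D_B$. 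This realizes our situation as a special case of the setup of Theorem \ref{cor1.3}.

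The one piece of bookkeeping needed is that $\nu_B(f)(p)$ has two a priori distinct interpretations — the one recorded in the text uses iterated action of vector fields from $\mathcal M_1(B)$, whereas by definition $\nu_B(f)(p)=\nu_{\mathcal D_B}(f)(p)$ uses only elements of the real module $\mathcal D_B$. Because of the identities $L_j=X_{2j-1}+iX_{2j}$ and $\ov{L_j}=X_{2j-1}-iX_{2j}$, the complexification of $\mathcal D_B$ coincides with $\mathcal M_1(B)$ as a module over the sheaf of complex-valued smooth functions. Decomposing each $Y_j\in\mathcal M_1(B)$ as $Y_j=A_j+iB_j$ with $A_j,B_j\in\mathcal D_B$ and expanding $Y_k\cdots Y_1(f)(p)$ by $\CC$-multilinearity in each slot separately shows that the vanishing condition along $\mathcal M_1(B)$ up to order $k$ is equivalent to the same condition along $\{X_1,\dots,X_{2s}\}$; hence the two interpretations of $\nu_B$ coincide and we may freely use $\nu_B=\nu_{\mathcal D_B}$.

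With this identification recorded, parts (1), (2), (3) of Theorem \ref{cor2.3} become literal instances of parts (1), (2), (3) of Theorem \ref{cor1.3} applied to the system $\mathcal D=\mathcal D_B$ on $M$ in the chosen real coordinates. I do not expect a genuine obstacle: the Helffer-Nourrigat and Baouendi-Rothschild normalization powering Theorem \ref{cor1.3} is a statement about an arbitrary finite system of linearly independent real smooth vector fields on a real manifold, so it transfers without change to $\{X_1,\dots,X_{2s}\}$ on $M$. The only substantive check is the multilinearity argument of the previous paragraph, which certifies that switching between the real generating set $\{X_j\}$ and the complex generating set $\{L_\alpha,\ov{L_\alpha}\}$ does not shift the order of vanishing.
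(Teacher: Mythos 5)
Your proposal is correct and follows the same route as the paper: Theorem \ref{cor2.3} is obtained from Theorem \ref{cor1.3} by passing to local real coordinates on $M$ and applying that theorem to the real module $\mathcal D_B$ generated by $\{\h{Re}(L_j),\h{Im}(L_j)\}_{j=1}^s$. The paper simply asserts the equivalence between iterating along $\mathcal D_B$ and iterating along $\mathcal M_1(B)$ (``Notice that $\nu_B(f)(p)$ is the least $m$ such that $Y_m\cdots Y_1(f)(p)\ne 0$ where $Y_j\in\mathcal M_1(B)$''), whereas you verify it explicitly via the decomposition $Z_j=A_j+iB_j$ and multilinearity of the iterated operator; this bookkeeping check is sound and worth spelling out, but the overall argument is the one the paper intends.
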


\begin{cor}\label{cor2.4} Let  $M\subset \CC^n$ be a real hypersurface with $n\ge 2$.
Let $L$ be a CR vector field along $M$ not vanishing at any point.
Let $f,g$ be germs of smooth functions over $M$ at $p\in M$. Then

\noindent   (1). $${\nu}_{L}(fg)(p)={\nu}_{L}(f)(p)+
  {\nu}_{L}(g)(p).$$

\noindent (2). If $0\leq f\leq g$ and $g(p)=f(p)$, then $
{\nu}_{L}(f)(p)\geq {\nu}_{L}(g)(p)$.
\end{cor}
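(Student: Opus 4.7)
The plan is to derive Corollary \ref{cor2.4} as an immediate specialization of Theorem \ref{cor2.3} to the complex rank-one case. Since $L$ is assumed to be nowhere vanishing on $M$, the set $B := \mathrm{span}_{\CC}\{L\}$ is a well-defined smooth complex subbundle of $T^{(1,0)}M$ of complex rank one. By the convention set up just before Theorem \ref{cor2.3}, $\nu_L(f)(p) = \nu_B(f)(p)$ for any germ of smooth function $f$ at $p$. With this identification in hand, parts (1) and (2) of the corollary are literally parts (1) and (2) of Theorem \ref{cor2.3} applied to this $B$.

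The only point that deserves a separate word is the identification $\nu_L = \nu_B$ itself, since $\nu_B$ is defined in Section 2.1 through the real module $\mathcal{D}_B$ generated by $\mathrm{Re}(L)$ and $\mathrm{Im}(L)$, while $\nu_L$ is most naturally understood via the complex vector fields $L$ and $\bar L$. The equivalence is routine: the inclusion $\mathcal{D}_B \subset \mathcal{M}_1(B)$ is immediate, and conversely any composition $Y_m\cdots Y_1(f)(p)$ with $Y_j=\alpha_j L+\beta_j\bar L\in \mathcal{M}_1(B)$ expands at $p$ into a finite $\CC$-linear combination of compositions $W_m\cdots W_1(f)(p)$ with $W_j\in\{L,\bar L\}$; using $L=\mathrm{Re}(L)+i\,\mathrm{Im}(L)$ and $\bar L=\mathrm{Re}(L)-i\,\mathrm{Im}(L)$, these reassemble into a finite $\CC$-linear combination of compositions built from $\mathrm{Re}(L)$ and $\mathrm{Im}(L)$. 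Non-vanishing of the total sum forces non-vanishing of at least one summand, so $\nu_{\mathcal{D}_B}(f)(p)\le \nu_{\mathcal{M}_1(B)}(f)(p)$, and equality follows.

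I do not expect any substantive obstacle: Corollary \ref{cor2.4} is a cosmetic restatement of the complex rank-one case of Theorem \ref{cor2.3}, whose deep content is in turn supplied by Theorem \ref{cor1.3} together with the Helffer--Nourrigat--Baouendi--Rothschild normalization of Section 2. All of this has already been established above, so the proof reduces to verifying the single bookkeeping step just described and invoking Theorem \ref{cor2.3}.
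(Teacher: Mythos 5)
Your proposal is correct and matches the paper's intent exactly: the paper sets the convention $\nu_L(f)(p)=\nu_B(f)(p)$ for $B=\mathrm{span}\{L\}$ immediately before Theorem \ref{cor2.3}, and Corollary \ref{cor2.4} is then presented as the complex rank-one specialization of that theorem, with no separate argument given. Your extra paragraph spelling out the identification $\nu_{\mathcal{D}_B}=\nu_{\mathcal{M}_1(B)}$ is a reasonable elaboration of the paper's one-line remark preceding Theorem \ref{cor2.3} and introduces no new ideas beyond what the paper takes for granted.
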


Corollary \ref{cor2.4} had  first appeared in a paper of D'Angelo
[pp 105, 3. Remark, \cite{DA3}].

\begin{cor}\label{cor1.5}
  Assume that $M$ is pseudoconvex. For any $p\in M$ and any subbundle  $B$
   of $T^{(1,0)}M$ with complex dimension $s$, then

\noindent  (1). $c^{(s)}(B,p)$ and   $c^{(s)}(M,p)$ are  even
numbers if not infinity.

\noindent  (2). Let $L$ be a CR  vector field of $M$ not equal to
zero at any point. If $\mathcal{D}_{L}^{2j}\lambda(X,{X})(0)=0$ and
  $\mathcal{D}_{L}^{2k}\lambda(Y,{Y})(0)=0$, then
$\mathcal{D}_{L}^{j+k+1}\lambda(X,{Y})(0)=0$.
\end{cor}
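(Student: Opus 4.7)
My plan is to derive both parts of Corollary \ref{cor1.5} as direct consequences of Theorem \ref{cor2.3}, using pseudoconvexity only through the fact that $\ld$ is a positive semi-definite Hermitian form.

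For (1), I would first note that pseudoconvexity (after replacing $\rho$ by $-\rho$ if necessary) gives $\ld(L,L)(q)\geq 0$ for every $q\in M$ and every $L\in \Gamma_\infty(B)$. Theorem \ref{cor2.3}(3) then forces $\nu_B(\ld(L,L))(p)$ to be even or $\infty$. Taking the infimum over $L$ with $L|_p\neq 0$ and invoking the identity (\ref{006}) yields that $c^{(s)}(B,p)$ is itself even or $\infty$. Since $c^{(s)}(M,p)=\sup_B c^{(s)}(B,p)$ and a bounded supremum over even non-negative integers is attained, $c^{(s)}(M,p)$ inherits the same property.

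For (2), the key input will be the Cauchy--Schwarz inequality applied pointwise to the positive semi-definite Hermitian form $\ld$, namely $|\ld(X,Y)(q)|^2\leq \ld(X,X)(q)\,\ld(Y,Y)(q)$ for every $q\in M$. I would first translate the hypotheses $\mathcal{D}_L^{2j}\ld(X,X)(0)=0$ and $\mathcal{D}_L^{2k}\ld(Y,Y)(0)=0$ into $\nu_L(\ld(X,X))(0)\geq 2j+1$ and $\nu_L(\ld(Y,Y))(0)\geq 2k+1$. Because both functions are non-negative, Theorem \ref{cor2.3}(3) upgrades these lower bounds to $2j+2$ and $2k+2$ respectively; in particular $\ld(X,X)$ and $\ld(Y,Y)$ vanish at $0$, and by Cauchy--Schwarz so does $\ld(X,Y)$.

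Next I would apply Theorem \ref{cor2.3}(2) with $f:=|\ld(X,Y)|^2$ and $g:=\ld(X,X)\,\ld(Y,Y)$ (both real-valued, non-negative, and agreeing at $0$), obtaining $\nu_L(|\ld(X,Y)|^2)(0)\geq \nu_L(\ld(X,X)\,\ld(Y,Y))(0)$. Theorem \ref{cor2.3}(1) lets me compute both sides: the right equals $\nu_L(\ld(X,X))(0)+\nu_L(\ld(Y,Y))(0)\geq 2(j+k+2)$, and since $\overline{\ld(X,Y)}=\ld(Y,X)$ and $\mathcal{D}_L$ is generated by the real vector fields $\h{Re}(L),\h{Im}(L)$, one has $\nu_L(\overline{\ld(X,Y)})=\nu_L(\ld(X,Y))$, so the left equals $2\nu_L(\ld(X,Y))(0)$. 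Cancelling the factor $2$ gives $\nu_L(\ld(X,Y))(0)\geq j+k+2$, which is precisely $\mathcal{D}_L^{j+k+1}\ld(X,Y)(0)=0$. The only subtlety I expect to verify carefully is that the pair $(f,g)$ really meets the common-zero hypothesis of Theorem \ref{cor2.3}(2); this is immediate from the evenness upgrade forcing $\nu_L(\ld(X,X))(0)\geq 2$, so that $\ld(X,X)(0)=0$ and hence both $f(0)$ and $g(0)$ vanish. No genuine obstacle arises beyond this bookkeeping.
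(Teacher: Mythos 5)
Your proof is correct and follows the same route as the paper: pseudoconvexity gives $\ld(L,L)\ge 0$ so Theorem \ref{cor2.3}(3) yields part (1), while for part (2) the paper likewise upgrades the hypotheses by parity, invokes Cauchy--Schwarz for the semidefinite Hermitian form $\ld$, and then applies Theorem \ref{cor2.3}; you have merely spelled out the bookkeeping (the factorization of $|\ld(X,Y)|^2$, the conjugation-invariance of $\nu_L$, and the $f(0)=g(0)=0$ hypothesis) that the paper leaves implicit in its one-line ``follows from Theorem \ref{cor2.3}.''
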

\begin{proof} Pseudo-convexity of $M$ implies that $\ld(L,L)\ge 0$.
Then (1) follows immediately from (\ref{006}) and Theorem
\ref{cor2.3}(3).


 To prove (2),  by (1),  the assumption in (2) shows that
$\mathcal{D}_{L}^{2i+1}\lambda(X,{X})(0)=0$ and
  $\mathcal{D}_{L}^{2j+1}\lambda(Y,{Y})(0)=0$. From the Schwarz
  inequality for a non-negative Hermition form, it follows that
$$|\lambda(X,{Y})|^2\leq \lambda(X,{X})\cdot
\lambda(Y,{Y}).$$ The statement in (2) follows from Theorem
\ref{cor2.3}.
\end{proof}

\bigskip
Still let $M, \ B$ be as above and assume that $M$ is pseudoconvex.
 For
$V_B=\{L_1,\cdots,L_s\}$, a basis of smooth sections  of $B$ near
$p$. Let $L$ be the one such that $c^{(s)}(B,
p)=\nu_{B}\left(\ld(L,L)\right)(p)$ and $L=\sum_{j}a_jL_j$. Then
$\ld(L,L)=\sum_{j,k}a_j\ov{a_k}\ld(L_j,L_k).$ Define the trace of
the Levi-form  along $V_B$ by
\begin{equation}
  \text{tr}_{V_B}\ld_M(q)=\sum\limits_{j=1}^{s}  \ld(L_j,{L_j}),
  \ \ q\approx p.
 \end{equation}
By Theorem \ref{cor2.3} and Corollary \ref{cor1.5}(2), it follows
that
$$\nu_{B}\left(\ld(L,L)\right)=\min_{1\leq j\leq s}\{\nu_{B}\left(\ld(L_j,L_j)\right)\}=
\nu_{B}\left( \text{tr}_{V_B}\ld_{M}\right)(p).$$ Hence, we have
\begin{cor}
Assume that $M$ is pseudoconvex and $B$ is a smooth complex
subbundle of $T^{(1,0)}M$. For any local linearly independent local
frame $V_B$ of $\Gamma_\infty(B)$ near $p\in M$, it holds that
$$c^{(s)}(B,p)=\nu_{B}\left( \text{tr}_{V_B}\ld_{M}\right)(p).$$
\end{cor}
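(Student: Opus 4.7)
The plan is to factor the identity through the intermediate quantity $m^{*} := \min_{1 \le j \le s} \nu_{B}(\ld(L_{j}, L_{j}))(p)$, proving separately that $c^{(s)}(B,p) = m^{*}$ and $\nu_{B}(\text{tr}_{V_B}\ld_{M})(p) = m^{*}$. Both identities are consequences of Theorem~\ref{cor2.3}, with pseudoconvexity entering through the pointwise non-negativity of $\ld(L,L)$ for each section $L$ of $B$.

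For the first equality, the bound $c^{(s)}(B,p) \le m^{*}$ is immediate from (\ref{006}), since each $L_j$ is a nonvanishing section. For the reverse, I would take any $L = \sum_{j} a_{j} L_{j}$ with $L|_{p} \ne 0$, expand $\ld(L, L) = \sum_{j,k} a_{j}\ov{a_{k}} \ld(L_{j}, L_{k})$, and control the mixed terms via the pointwise Schwarz inequality $|\ld(L_{j}, L_{k})|^{2} \le \ld(L_{j}, L_{j})\,\ld(L_{k}, L_{k})$. Since $\nu_B$ is invariant under complex conjugation, $\nu_B(|\ld(L_j,L_k)|^2) = 2\nu_B(\ld(L_j,L_k))$, and combining with Theorem~\ref{cor2.3}(1) and (2) yields
$$2\nu_{B}(\ld(L_{j}, L_{k}))(p) \;\ge\; \nu_{B}(\ld(L_{j}, L_{j}))(p) + \nu_{B}(\ld(L_{k}, L_{k}))(p) \;\ge\; 2m^{*}.$$
Since multiplication by the smooth factors $a_{j}\ov{a_{k}}$ only increases vanishing order (Theorem~\ref{cor2.3}(1)), every summand of $\ld(L,L)$ has $\nu_B$-order at least $m^{*}$, and the same bound passes to the sum.

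For the second equality, $\nu_{B}(\text{tr}_{V_B}\ld_{M})(p) \le m^{*}$ follows from $\ld(L_{j}, L_{j}) \le \text{tr}_{V_B}\ld_{M}$ (using that each summand is non-negative by pseudoconvexity) together with Theorem~\ref{cor2.3}(2). The reverse inequality is the one subtle step: invoking the normalization of Section~2, each $\ld(L_{j}, L_{j})$ has a non-negative lowest weighted-homogeneous part (the key point behind the parity argument in Theorem~\ref{cor1.3}(3)). Hence the lowest weighted-homogeneous part of $\text{tr}_{V_B}\ld_{M}$ is the sum, taken over those $j$ attaining $m^{*}$, of non-negative, not-identically-zero weighted polynomials, and therefore is not identically zero. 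Theorem~\ref{thm1.2} then identifies the weighted vanishing order of this leading part with $\nu_{B}(\text{tr}_{V_B}\ld_{M})(p)$, giving $\nu_{B}(\text{tr}_{V_B}\ld_{M})(p) = m^{*}$. The main conceptual point is precisely this no-cancellation step: pseudoconvexity is what prevents the leading parts of the diagonal Levi forms from summing to zero and inflating the vanishing order of the trace above $m^{*}$.
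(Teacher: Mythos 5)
Your proof is correct and follows essentially the same route the paper takes: expand $\ld(L,L)=\sum_{j,k}a_j\ov{a_k}\ld(L_j,L_k)$ over the frame, control the off-diagonal terms via the Cauchy--Schwarz inequality for the Levi form combined with Theorem~\ref{cor2.3} (this is precisely what Corollary~\ref{cor1.5}(2) packages), and use non-negativity together with Theorem~\ref{cor2.3}(2) to identify the minimal diagonal order with the order of the trace. One small expository quibble: the ``no cancellation from pseudoconvexity'' point you flag as the subtle reverse inequality is actually what underlies the $\le$ direction (it is the content of Theorem~\ref{cor2.3}(2)), while $\nu_B(\text{tr}_{V_B}\ld_M)(p)\ge m^*$ is immediate since every summand of the trace has order at least $m^*$.
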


\noindent Xiaojun Huang, Department of Mathematics, Rutgers
University, New Brunswick, NJ 08903, USA
(huangx$@$math.rutgers.edu);

\noindent Wanke Yin,   School of Mathematics and Statistics, Wuhan
University, Wuhan, Hubei 430072, China (wankeyin@whu.edu.cn).

\end{document}